\documentclass{article}
\title{\textsc{A Path-Complete Approach for Optimal Control of Switched Systems}}

\author{
Léa Ninite$^\star$\thanks{Léa Ninite is a Research Fellow of the Fonds de la Recherche Scientifique -- FNRS. Email: \texttt{lea.ninite@uclouvain.be}}~, Adrien Banse$^\star$, Guillaume O.~Berger$^\star$, Raphaël M.~Jungers$^\star$\\[4pt]
$^\star$ICTEAM, UCLouvain, Belgium
}

\date{}

\usepackage{fullpage}
\usepackage{comment}
\usepackage{mathtools}
\usepackage{circuitikz}
\usepackage{tikz}
\usepackage{subfig}
\usepackage{amsmath}
\usepackage{amssymb}
\usepackage{booktabs}
\usepackage{hyperref}
\hypersetup{
  colorlinks=true,
  linkcolor=black,
  citecolor=black,
  urlcolor=black
}
\usepackage[capitalize]{cleveref}
\usetikzlibrary{arrows.meta}
\usetikzlibrary{positioning}
\usetikzlibrary{calc}
\mathtoolsset{showonlyrefs}

\newtheorem{definition}{Definition}
\newtheorem{theorem}{Theorem}
\newtheorem{proposition}{Proposition}
\newtheorem{lemma}{Lemma}

\newtheorem{remark}{Remark}
\newtheorem{example}{Example}

\newcommand{\Mset}{\langle M\rangle}
\renewcommand{\Re}{\mathbb{R}}
\newcommand{\Ne}{\mathbb{N}}
\newcommand{\Exp}{\mathbb{E}}
\newenvironment{proof}[1][Proof]{%
  \noindent\textit{#1. }%
}{\hfill $\square$\\ \par}
\DeclareMathOperator*{\argmax}{arg\,max}
\DeclareMathOperator*{\argmin}{arg\,min}
\newcommand{\calS}{\mathcal{S}}
\newcommand{\calF}{\mathcal{F}}
\newcommand{\calE}{\mathcal{E}}
\newcommand{\calG}{\mathcal{G}}
\newcommand{\calT}{\mathcal{T}}
\newcommand{\calH}{\mathcal{H}}
\newcommand{\nxn}{{n\times n}}
\newcommand{\nxm}{{n\times m}}
\newcommand{\mxm}{{m\times m}}
\newcommand{\mxn}{{m\times n}}
\newcommand{\txtquad}{\mathrm{quad}}

\newcommand{\ct}{\tilde{c}}

\begin{document}
\maketitle
\begin{abstract}
We study the problem of estimating the \emph{value function} of discrete-time switched systems under \emph{arbitrary} switching. Unlike the switched LQR problem, where both inputs and mode sequences are optimized, we consider the case where switching is exogenous. For such systems, the number of possible mode sequences grows exponentially with time, making the exact computation of the value function intractable. This motivates the development of tractable bounds that approximate it. We propose a novel framework, based on \emph{path-complete} graphs, for constructing computable upper bounds on the value function. In this framework, multiple quadratic functions are combined through a directed graph that encodes dynamic programming inequalities, yielding convex and sound formulations. For example, for switched linear systems with quadratic cost, we derive tractable LMI-based formulations and provide computational complexity bounds. We further establish approximation guarantees for the upper bounds and show asymptotic non-conservativeness using concepts from graph theory. Finally, we extend the approach to controller synthesis for systems with affine control inputs and demonstrate its effectiveness on numerical examples.

\end{abstract}
\section{Introduction}
Discrete-time switched linear systems are multi-modal discrete-time systems wherein each mode $i$ corresponds to a linear transition system of the form
\begin{equation}
    x_{k+1}=A_ix_k+B_iu_k. 
\end{equation}
These systems appear naturally in a wide range of applications---from mechanical systems~\cite{blanchini2012constant} to power systems~\cite{sanchez2019practical} and cyber-physical systems \cite{donkers2011stability} or as abstractions of more complex nonlinear or hybrid systems~\cite{liberzon2003switching,jungers2009joint,sun2011stability}.
In this work, we are interested in the cost-to-go analysis and optimization for these systems \emph{under arbitrary switching}.
This means that given a cost function $c(x, u)$ mapping state--input pairs to cost values and an initial point $x_0$, we aim to evaluate or optimize the \emph{worst-case total cost} of the system from $x_0$, that is, the largest total cost among all trajectories (i.e., for all mode sequences) starting from $x_0$.
This problem is important in safety- or energy-critical applications because it provides guarantees of safety or performance, even under adversarial conditions, such as in~\cite{liberzon2003switching,sun2011stability,blanchini2008set} for stability.\\\\
For non-switched linear systems, in the well-known \emph{LQR} setting, the cost is given by
\begin{equation}
    c(x, u) = x^\top Q x + u^\top R u, \quad Q\succeq 0,\: R\succ 0.
\end{equation}
In this case, the optimal cost-to-go (called hereafter the \emph{value function} in the controlled case) admits a closed-form quadratic expression, which can be efficiently computed through the solution of the Riccati equation~\cite{mehrmann1991autonomous}. Similarly, if the system has no input, the associated cost-to-go (called hereafter the \emph{value function} in the autonomous case) can also be efficiently computed by solving a linear Bellman equation~\cite{bertsekas2012dynamic}.
However, for switched systems, the situation is different.
When the switching is arbitrary, the number of possible sequences grows exponentially with time, making the computation of the value function intractable, in both the controlled and the autonomous case, even for a given initial state $x_0$.
In particular, there is in general no closed-form expression for the value function, which is typically not quadratic and can be non-smooth~\cite{zhang2009value}.
This motivates the development of tractable approximations of the value function such as provable upper and lower bounds.\\\\
In this work, we propose a \emph{path-complete approach} to construct tractable bounds on the value function of switched linear systems under arbitrary switching.
The path-complete framework, first introduced in~\cite{ahmadi2014joint} for the stability analysis of switched linear systems, generalizes the use of a \emph{single} Lyapunov function to \emph{multiple} Lyapunov functions, which combined together in a combinatorial way lead to tighter stability certificates. 
In this work, we adopt the same combinatorial structure to obtain upper bounds on the value function of arbitrarily switched systems. 
Specifically, the multiple functions are computed by encoding dynamic programming conditions on a directed graph: each node $\alpha$ is associated with a function $V_\alpha$ (to be determined), and each edge $\alpha\rightarrow \beta$, labeled with a system mode $i$, represents a dynamic programming inequality $V_{\alpha} \geq c + V_{\beta}\circ A_i$ on these functions. 
A key advantage of this approach is that, in some situations (e.g., when considering a quadratic template for the functions $V_\alpha$), it provides a sound and tractable way (e.g., through semidefinite programming) of upper bounding the value function of switched linear systems under arbitrary switching, extending existing Lyapunov-based approaches.\\\\
This work focuses mostly on bounding the value function of \emph{autonomous} switched linear systems under arbitrary switching.
We note that the autonomous case is relevant not only for safety or performance analysis of closed-loop systems but also for safe and efficient controller synthesis (e.g., through approximate policy improvement~\cite{munos2003error} or informed-search algorithms like A$^*$~\cite{hart1968formal}).
Next to this, we also consider the \emph{controlled} case and show that the path-complete framework can be used to directly synthesize a controller that minimizes an upper bound on the value function of the associated closed-loop system.
\paragraph{Outline} Concretely, our contributions are as follows.
After introducing the problem (Section~\ref{sec:preliminaries}) and proving general dynamic programming bounds on the value function of autonomous switched nonlinear systems under arbitrary switching (Section~\ref{sec:upper-bound-general}), we provide the first path-complete framework for upper bounding the value function of such systems (Section~\ref{sec:path-complete-autonomous}).
Then, we apply this framework to switched linear systems with quadratic cost (Section~\ref{sec:autonomous-linear}), deriving tractable LMI formulations, along with computational complexity bounds.
We also derive approximation guarantees that quantify the tightness of the computed upper bound on the value function, and prove that the proposed approach is asymptotically non-conservative when using the duals of the so-called \emph{De Bruijn} graphs~\cite{de1948combinatorial} (see~\Cref{def:debruijn} below) with order tending to infinity.
Then, we apply the path-complete approach to the \emph{control} of switched linear systems with affine control input under arbitrary switching, providing tractable LMI formulations to controller design along with upper bounds on the value function (Section~\ref{sec:path-complete-control}).
Finally, we demonstrate the usefulness and efficiency of our approach on numerical examples (Section~\ref{sec:experiments}).
\paragraph{Related work} The \emph{switched LQR problem}, which aims to minimize the total quadratic cost of switched linear systems by choosing the affine control input \emph{and} the mode sequence, has received much attention in the literature.
For instance,~\cite{rantzer2005approximate,gorges2011optimal,zhang2012infinite,antunes2017linear,hou2024improved} propose efficient techniques to approximate the optimal cost-to-go under input-and-mode control, and~\cite{wu2020optimal} proposes a polynomial-time exact computation method under additional assumptions on the system. These techniques cannot be applied to our problem, as they treat the mode as a control variable, whereas in our setting the switching is arbitrary. Nevertheless, the work \cite{rantzer2005approximate} mentioned above bears several interesting connections with our approach. There, the author considers multiple quadratic functions satisfying inequalities of the form $V_i \leq c + V_j \circ A_{ij}$ (where $i,j$ are modes) to obtain \emph{lower} bounds on the optimal value function in the switched LQR setting, whereas we consider inequalities of the form 
$V_\alpha \geq c + V_\beta \circ A_i$ (where $\alpha,\beta$ are nodes and $i$ a mode) to derive \emph{upper} bounds on the value function. Despite the apparent similarity, there are several key differences with our approach. First, the quadratic functions $V_i$ are attached to modes of the system---and not to \emph{abstract} nodes as in our approach.
Hence, the graph that gives the inequalities is fixed and cannot be used as an algorithm parameter to balance efficiency and conservativeness.
By contrast, our approach considers arbitrary path-complete graphs (in particular, nodes do not necessarily correspond to system modes), allowing to augment the ``size'' of the graph to obtain tighter bounds.
Namely, we show that our approach is asymptotically non-conservative when considering a particular type of graphs (dual De Bruijn graphs) whose size tends to infinity; a similar result is not present in~\cite{rantzer2005approximate}.
Second, while~\cite{rantzer2005approximate} also derives approximation guarantees by scaling their lower bound, their approach is more conservative than ours.\footnote{In \cite{rantzer2005approximate}, the author uses a convex over-approximation of the minimum of quadratic functions consisting in any convex combination of these functions, whereas we leverage the less conservative \emph{S-procedure} from~\cite{boyd2004convex} (see~\cref{sec:approximation-guarantees} for details).}\\\\
The problem of optimal control in the context of nondeterministic systems has also received attention; for instance, in nondeterministic dynamic programming~\cite{fujita2004nondeterministic}, robust MDPs~\cite{suilen2024robust, Nilim2005RobustCO} or two-player games~\cite{moos2022robust}.
Nevertheless, to the best of our knowledge, no previous work on the value function of switched linear systems under arbitrary switching is available in the literature.

\paragraph{Notations} Given $M\in \mathbb N$, we let $\Mset\coloneqq\{1,\dots,M\}$.
The set of $n\times n$ symmetric positive definite (semi-definite) matrices is denoted by $\Re^\nxn_{\succ0}$ ($\Re^\nxn_{\succeq0}$).
The set of nonnegative-valued functions defined on $\Re^n$ (i.e., functions $f:\Re^n\to\Re_{\geq0}$) is denoted by $\calF^n_{\geq0}$.

\section{Problem statement}\label{sec:preliminaries}
\subsection{Switched systems and value function}

We consider a discrete-time autonomous switched system of the form
\begin{equation}
x_{k+1} = f_{\sigma(k)} (x_k), \quad k\in\Ne,
\label{eq:switched_controlled}
\end{equation}
where $x_k\in\Re^n$ is the \emph{state} at time $k$, $\sigma(k)\in\Mset$ is the \emph{mode} at time $k$ and for all $i\in\Mset$, $f_i:\Re^n\to\Re^n$.
The function $\sigma : \Ne \to \Mset$ is called the \emph{switching signal}.\\\\
Given $x\in\Re^n$ and a switching signal $\sigma:\Ne\to\Mset$, we denote by $\xi(\cdot,x,\sigma)$ the solution of~\eqref{eq:switched_controlled} with switching signal $\sigma$, i.e., $\xi(k,x,\sigma)=x_k$ for all $k\in\Ne$ where $x_0=x$ and $\{x_k\}_{k=0}^\infty$ satisfies~\eqref{eq:switched_controlled}.\\\\
We also consider a cost function $c:\Re^n\to\Re_{\geq0}$, mapping states to nonnegative cost values.\\\\
In this work, the switching signal is considered as an external \emph{uncontrolled} signal, meaning that the analysis of the system~\eqref{eq:switched_controlled} is made using the worst-case framework.
In particular, we consider the \emph{worst-case cost-to-go} defined as follows:

\begin{definition}[Cost-to-go and value function]
Consider the switched system~\eqref{eq:switched_controlled} and a cost function 
$c:\Re^n \to \Re_{\geq0}$. 
For each switching signal $\sigma:\Ne \to \Mset$, the \emph{cost-to-go} is the mapping 
$J_\sigma:\Re^n \to [0,+\infty]$ defined by
\[
J_\sigma(x) \coloneqq \sum_{k=0}^{\infty} c(\xi(k,x,\sigma)),
\]
with the convention that $J_\sigma(x)=+\infty$ if the series diverges.\\
The \emph{value function} $J:\Re^n \to [0,+\infty]$ captures the worst-case cost over all switching signals and is defined by
\[
J(x) \coloneqq \sup_{\sigma:\Ne\to\Mset} J_\sigma(x).
\]\vskip0pt
\label{definition:cost_to_go_val_function}
\end{definition}

The value function of switched linear systems can be very challenging to compute exactly, even with a quadratic cost function.\footnote{The paper~\cite{zhang2009value} shows that the \emph{finite-horizon} value function of switched linear systems under controlled switching with a quadratic cost function is a piecewise quadratic function whose number of pieces can grow exponentially with the horizon.
The infinite-horizon value function is therefore in general not quadratic or smooth and difficult to approximate.
A similar argument can be applied to systems under \emph{arbitrary} switching, although the proof is omitted due to space limitations.}
Hence, the objective of this paper is to derive tractable \emph{bounds} on the value function. 
To this end, we rely on the path-complete framework (introduced in~\cref{subsection: path-complete graphs} below), which enables the computation of upper bounds in a tractable manner as combinations (min or max) of quadratic functions.

\subsection{Path-complete graphs}\label{subsection: path-complete graphs}

The path-complete Lyapunov framework, introduced in~\cite{ahmadi2014joint}, generalizes classical quadratic Lyapunov techniques for analyzing the stability of discrete-time switched linear systems.
At its core, is the notion of \emph{path-complete graph}, which encodes Lyapunov inequalities between quadratic functions along different modes of the system, with the property that any mode sequence of the system can be obtained through a path in the graph.
In what follows, we briefly recall the definitions that we need.\\\\
We consider a \emph{directed labeled graph} $\calG = (\calS, \calE)$, where $\calS$ is a finite set of nodes and $\calE \subseteq \calS \times \calS\times\Mset$ is the set of edges labeled by elements of $\Mset$.
Each edge $(\alpha, \beta, i)\in\calE$ represents a possible transition from node $\alpha$ to node $\beta$ under mode $i$ of the switched system.

\begin{definition}[Path-complete graph]
Let $\calG = (\calS,\calE)$ be a directed labeled graph.
We say that $\calG$ is \emph{path-complete} (for $\Mset$) if for any $l\in\Ne_{>0}$ and any sequence $\sigma = (i_1,\ldots,i_l)\in\Mset^l$, there exists a path 
$\{(\alpha_k,\alpha_{k+1}, i_k)\}_{k=1}^l$ such that for each $k\in\{1,\ldots,l+1\}$, $\alpha_k\in\calS$, and for each $k\in\{1,\ldots,l\}$, $(\alpha_k,\alpha_{k+1},i_k)\in\calE$.
\end{definition}

An example of a path-complete graph is shown in \cref{fig:pc-graph}. The graph in~\cref{fig:nonpc-graph} is not path-complete (for example, it cannot generate the sequence $22$).
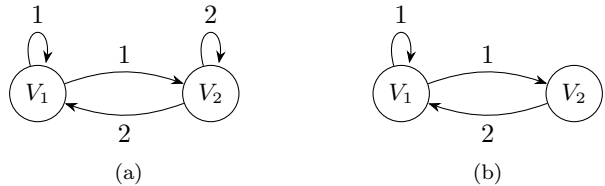
\begin{figure}[ht]
\centering

\subfloat[]{
\begin{tikzpicture}[>=Stealth, scale=1, transform shape,
    node distance=2.3cm,
    vertex/.style={circle, draw, minimum size=0.7cm, font=\small}]
    
    \node[vertex] (V1) {$V_1$};
    \node[vertex, right of=V1] (V2) {$V_2$};
    \path[->] (V1) edge[loop above] node{$1$} (V1);
    \path[->] (V2) edge[loop above] node{$2$} (V2);
    \path[->] (V1) edge[bend left=20] node[above]{$1$} (V2);
    \path[->] (V2) edge[bend left=20] node[below]{$2$} (V1);
\end{tikzpicture} \label{fig:pc-graph}
}
\hspace{1.3cm}
\subfloat[]{
\begin{tikzpicture}[>=Stealth, scale=1, transform shape,
    node distance=2.3cm,
    vertex/.style={circle, draw, minimum size=0.7cm, font=\small}]
    
    \node[vertex] (V1) {$V_1$};
    \node[vertex, right of=V1] (V2) {$V_2$};
    \path[->] (V1) edge[loop above] node{$1$} (U1);
    \path[->] (V1) edge[bend left=20] node[above]{$1$} (V2);
    \path[->] (V2) edge[bend left=20] node[below]{$2$} (V1);
\end{tikzpicture}  \label{fig:nonpc-graph}}

\caption{(a) Path-complete graph with two nodes, for a system with two switching modes. (b) Graph not path-complete.}
\label{fig:pc-vs-copc}
\end{figure}

Given a graph $\calG = (\calS,\calE)$, its \emph{dual graph} $\calG^{\mathsf{T}} = (\calS, \calE^{\mathsf{T}})$ is obtained by reversing the direction of each edge, i.e., $(\alpha, \beta, i)\in\calE \Leftrightarrow (\beta, \alpha, i)\in\calE^{\mathsf{T}}$. A graph is path-complete if and only if its dual is path-complete. \\\\
Below, we define two special classes of path-complete graphs:
\begin{definition}[Complete and co-complete graphs]
A directed labeled graph $\calG = (\calS, \calE)$ is \emph{complete} (for $\Mset$) if for each node $\alpha\in\calS$ and each label $i\in\Mset$, there exists at least one node $\beta\in\calS$ such that $(\alpha, \beta, i)\in\calE$. Conversely, $\calG$ is \emph{co-complete} if for each node $\beta\in\calS$ and each label $i\in\left <M \right >$, there exists at least one node $\alpha\in\calS$ such that $(\alpha, \beta, i)\in\calE$.\label{definition:complete_cocomplete}
\end{definition}

The graph in \cref{fig:pc-graph} is co-complete.

\section{Dynamic programming inequalities and bounds on the value function}\label{sec:upper-bound-general}

In stochastic optimal control, i.e., when one aims to minimize the expected total cost $\bar{J}$ of trajectories of a stochastic system of the form $x_{k+1}=f(x_k,v_k)$ where $\{v_k\}_{k=0}^\infty$ is an i.i.d.~noise sequence, one has the famous \emph{Bellman equation}~\cite{bertsekas2012dynamic}: $\bar{J}(x)=c(x)+\Exp_v\bar{J}(f(x,v))$.
In the case of nondeterministic and nonstochastic systems\footnote{The system~\eqref{eq:switched_controlled} is nondeterministic because the switching sequence is arbitrary, and nonstochastic because no probability distribution is associated with it.}, a similar equation can be derived (see, e.g.,~\cite{fujita2004nondeterministic,suilen2024robust}).
We recall it here for convenience (although we will only focus on its inequality versions in Propositions~\ref{prop:bellman-upper} and~\ref{prop:bellman-lower}):

\begin{proposition}
The value function $J$ of system~\eqref{eq:switched_controlled} with cost $c:\Re^n\to\Re_{\geq0}$ satisfies the following \emph{dynamic programming equation}:
\begin{equation}
J(x)=c(x)+\max_{i\in\Mset} J(f_i(x)), \quad \forall\,x\in\Re^n.\label{eq:bellman_eq}
\end{equation}
\vskip0pt
\end{proposition}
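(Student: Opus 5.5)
The plan is to prove the two inequalities separately, working in the extended nonnegative reals $[0,+\infty]$. There all series have nonnegative terms, so sums and suprema are always defined. The key structural fact is a \emph{shift decomposition}: for any switching signal $\sigma$, write $\sigma=(i,\sigma')$ with $i=\sigma(0)$ and $\sigma'(k)=\sigma(k+1)$. Then $\xi(k+1,x,\sigma)=\xi(k,f_i(x),\sigma')$ for all $k\in\Ne$, which follows by induction on $k$ from~\eqref{eq:switched_controlled}. Splitting off the first term of the series (valid for nonnegative series, including the divergent case) gives
\[
J_\sigma(x)=c(x)+J_{\sigma'}(f_i(x)).
\]
Conversely, any pair $(i,\sigma')\in\Mset\times\Mset^{\Ne}$ defines a switching signal $\sigma$ by concatenation. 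The map $\sigma\mapsto(\sigma(0),\sigma')$ is therefore a bijection between switching signals and such pairs.

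\textbf{Inequality ``$\le$''.} For every $\sigma$, the decomposition yields $J_\sigma(x)=c(x)+J_{\sigma'}(f_{\sigma(0)}(x))\le c(x)+J(f_{\sigma(0)}(x))\le c(x)+\max_{i\in\Mset}J(f_i(x))$. The maximum is attained because $\Mset$ is finite; this holds even when values are $+\infty$. Taking the supremum over $\sigma$ gives $J(x)\le c(x)+\max_i J(f_i(x))$.

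\textbf{Inequality ``$\ge$''.} Fix $i\in\Mset$ and any $\sigma'$. The concatenated signal $\sigma=(i,\sigma')$ satisfies $J(x)\ge J_\sigma(x)=c(x)+J_{\sigma'}(f_i(x))$. Taking the supremum over $\sigma'$, and using that $c(x)+\cdot$ commutes with suprema on $[0,+\infty]$ because $c(x)$ is finite, gives $J(x)\ge c(x)+J(f_i(x))$. Maximizing over $i$ concludes.

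The only delicate point is the bookkeeping with $+\infty$. I must check three things: splitting the first term of a divergent nonnegative series is valid, the supremum commutes with adding the finite constant $c(x)$, and ``$\max$'' is meaningful for $[0,+\infty]$-valued quantities over a finite set. Apart from this, the argument is a direct consequence of the shift decomposition. No earlier results are needed beyond Definition~\ref{definition:cost_to_go_val_function}.
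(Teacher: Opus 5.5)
Your proof is correct and complete. The shift identity $J_\sigma(x)=c(x)+J_{\sigma'}(f_{\sigma(0)}(x))$ holds in $[0,+\infty]$ for nonnegative series, and every pair $(i,\sigma')$ arises from a switching signal by concatenation; together these give both inequalities, and your treatment of the value $+\infty$ is sound.

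The paper does not actually prove this proposition; it recalls it from the nondeterministic dynamic programming literature, so there is no in-paper argument to match. The nearest arguments are the proofs of Propositions~\ref{prop:bellman-upper} and~\ref{prop:bellman-lower}. Those telescope one-step inequalities along a single trajectory over a finite horizon and then let the horizon go to infinity. Your one-step decomposition needs neither a limit nor any stability assumption.

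It is also worth noting that the equation does not characterize $J$ uniquely. For every constant $C>0$, $J+C$ satisfies it as well, while Proposition~\ref{prop:bellman-upper} shows that every finite nonnegative solution dominates $J$. This non-uniqueness is exactly why the lower-bound result needs the extra anchoring hypotheses: $W(0)=0$, continuity at $0$, and asymptotic stability.
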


In practice, when one searches for a value function in a limited template (e.g., quadratic functions), it may be impossible to find a function $J$ that satisfies~\eqref{eq:bellman_eq} exactly.
For this reason, we consider \emph{relaxations} of this dynamic programming equation, 
expressed as \emph{dynamic programming inequalities}. Depending on the inequality direction, 
these relaxations yield either upper or lower bounds on the exact value function.\footnote{We note that Bellman inequalities were also used in stochastic optimal control to obtain bounds on the expected total cost $\bar{J}$; see, e.g.,~\cite{lu2021convex,rantzer2005approximate,lincoln2006relaxing}.}

The following two propositions formalize this.

\begin{proposition}[Upper bound]\label{prop:bellman-upper}
Consider system~\eqref{eq:switched_controlled} and a cost function $c:\Re^n\to\Re_{\geq0}$.
Let $V:\Re^n\to\Re_{\geq0}$ satisfy the following \emph{dynamic programming inequality}:
\begin{equation}\label{eq:bellman-upper}
V(x)\geq c(x) + \max_{i\in\Mset} V(f_i(x)), \quad \forall\,x\in\Re^n.
\end{equation}
Then, for all $x\in\Re^n$, $V(x)\geq J(x)$, where $J$ is the value function of system~\eqref{eq:switched_controlled} with cost $c$.
\end{proposition}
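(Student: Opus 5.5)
The plan is to fix an arbitrary initial state $x\in\Re^n$ and an arbitrary switching signal $\sigma:\Ne\to\Mset$, and to show that $V(x)\geq J_\sigma(x)$. Taking the supremum over $\sigma$ in Definition~\ref{definition:cost_to_go_val_function} then gives $V(x)\geq J(x)$. The argument is a telescoping of the dynamic programming inequality~\eqref{eq:bellman-upper} along the single trajectory $x_k=\xi(k,x,\sigma)$, followed by a limit in the horizon.

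First I drop the maximum in~\eqref{eq:bellman-upper}, keeping only the mode actually used at time $k$. Applying the inequality at the point $x_k$ gives
\begin{equation}
V(x_k)\geq c(x_k)+V(f_{\sigma(k)}(x_k)) = c(x_k)+V(x_{k+1}),\quad k\in\Ne.
\end{equation}
Summing these inequalities for $k=0,\dots,N-1$ and cancelling the intermediate terms yields, for every horizon $N\geq 1$,
\begin{equation}
V(x)\geq \sum_{k=0}^{N-1}c(x_k)+V(x_N)\geq \sum_{k=0}^{N-1}c(x_k).
\end{equation}
The last step uses that $V$ takes values in $\Re_{\geq0}$. Equivalently, this is an induction on $N$ for the statement $V(x)\geq\sum_{k<N}c(x_k)+V(x_N)$.

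The only delicate point is the passage to the infinite sum, since $J_\sigma(x)$ may a priori be $+\infty$. Because $c\geq0$, the partial sums $\sum_{k=0}^{N-1}c(x_k)$ are nondecreasing in $N$. Their limit, finite or $+\infty$, is therefore $J_\sigma(x)$, consistent with the convention in Definition~\ref{definition:cost_to_go_val_function}. Each partial sum is bounded by the finite real number $V(x)$, so the limit is finite and satisfies $J_\sigma(x)\leq V(x)$. Since $\sigma$ was arbitrary, $J(x)=\sup_\sigma J_\sigma(x)\leq V(x)$.

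I do not expect a real obstacle. The key points are that both nonnegativity assumptions are used: $V\geq0$ lets us drop the tail term $V(x_N)$, and $c\geq0$ gives monotone convergence of the partial sums. No continuity or measurability of $V$ is needed.
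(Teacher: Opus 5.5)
Your proposal is correct and follows the paper's proof essentially verbatim: it telescopes $V(x_k)\geq c(x_k)+V(x_{k+1})$ along $\xi(\cdot,x,\sigma)$, discards $V(x_H)\geq 0$, lets the horizon tend to infinity, and takes the supremum over $\sigma$. Your explicit remark that the partial sums are nondecreasing (since $c\geq 0$) and bounded by the finite number $V(x)$ makes the limit step slightly more careful than the paper's, but the argument is the same.
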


\begin{proof}
Let $x\in\Re^n$ and $\sigma:\Ne\to\Mset$.
For each $k\in\Ne$, denote $x_k=\xi(k,x,\sigma)$.
Note that by~\eqref{eq:bellman-upper}, it holds that for all $k\in\Ne$, $V(x_k)\geq c(x_k)+V(x_{k+1})$.
Hence, for all $H\in\Ne_{>0}$,
\[
V(x)\geq V(x)-V(x_H) = \sum_{k=0}^{H-1} V(x_k)-V(x_{k+1}) \geq \sum_{k=0}^{H-1} c(x_k).
\]
Taking the limit when $H\to\infty$, we get that $V(x)\geq J_\sigma(x)$.
Then, taking the supremum over $\sigma$, we obtain that $V(x)\geq J(x)$, concluding the proof. 
\end{proof}

\begin{proposition}[Lower bound]\label{prop:bellman-lower}
Consider system~\eqref{eq:switched_controlled} and a cost function $c:\Re^n\to\Re_{\geq0}$.
Let $W:\Re^n\to\Re_{\geq0}$ satisfy the following \emph{dynamic programming inequality}:
\begin{equation}\label{eq:bellman-lower}
W(x)\leq c(x) + \max_{i\in\Mset} W(f_i(x)), \quad \forall\,x\in\Re^n.
\end{equation}
Assume that $W$ is continuous at $0$, $W(0)=0$, and all trajectories of the system converge to $0$ (i.e., the system is asymptotically stable).
Then, for all $x\in\Re^n$, $W(x)\leq J(x)$, where $J$ is the value function of system~\eqref{eq:switched_controlled} with cost $c$.
\end{proposition}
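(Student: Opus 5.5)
The plan is to build, from the inequality~\eqref{eq:bellman-lower}, one worst-case switching signal adapted to the given initial state, and then to telescope along its trajectory. Fix $x\in\Re^n$. Since $\Mset$ is finite, the maximum in~\eqref{eq:bellman-lower} is attained at every point. We therefore define $\sigma$ greedily. Set $x_0=x$. For each $k\in\Ne$, choose $\sigma(k)\in\argmax_{i\in\Mset} W(f_i(x_k))$ and put $x_{k+1}=f_{\sigma(k)}(x_k)$. Then $x_k=\xi(k,x,\sigma)$, and~\eqref{eq:bellman-lower} gives, for every $k\in\Ne$,
\[
W(x_k)\leq c(x_k)+W(x_{k+1}).
\]

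Next we telescope exactly as in the proof of Proposition~\ref{prop:bellman-upper}, with the inequalities reversed. For every $H\in\Ne_{>0}$,
\[
W(x)-W(x_H)=\sum_{k=0}^{H-1}\bigl(W(x_k)-W(x_{k+1})\bigr)\leq \sum_{k=0}^{H-1} c(x_k)\leq J_\sigma(x)\leq J(x),
\]
where the last two inequalities hold because $c\geq 0$ and by the definition of $J$ as a supremum. Hence $W(x)\leq J(x)+W(x_H)$ for every $H$. This is trivially true if $J(x)=+\infty$, so only the finite case matters.

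The remaining step, and the only place the extra hypotheses are used, is to let $H\to\infty$ and show $W(x_H)\to 0$. Asymptotic stability means that every trajectory converges to $0$, including the one generated by our adaptively chosen $\sigma$. Hence $x_H\to 0$. Continuity of $W$ at $0$ together with $W(0)=0$ then gives $W(x_H)\to 0$, and so $W(x)\leq J(x)$.

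The main subtlety is that we must not telescope along an arbitrary signal. Unlike the upper bound, the inequality $W(x_k)\leq c(x_k)+W(x_{k+1})$ only holds along the maximizing mode, so the signal has to be constructed state-dependently as above. It is then legitimate because $J$ is a supremum over all signals $\sigma:\Ne\to\Mset$, and asymptotic stability is assumed for all trajectories, so it covers this one too.
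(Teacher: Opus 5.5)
Your proof is correct and follows the paper's argument essentially step for step. Both build the switching signal greedily from $\argmax_{i\in\Mset} W(f_i(x_k))$, telescope $W(x_k)\leq c(x_k)+W(x_{k+1})$, and pass to the limit using $x_H\to 0$, continuity of $W$ at $0$ and $W(0)=0$.
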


\begin{proof}
Let $x\in\Re^n$.
Let $(x_k)_{k=0}^\infty$ and $(i_k)_{k=0}^\infty$ be defined recursively by $x_0=x$ and for all $k\in\Ne$, $i_k\in\argmax_{i\in\Mset}W(f_i(x_k))$ and $x_{k+1}=f_{i_k}(x_k)$.
Note that by~\eqref{eq:bellman-lower}, it holds that for all $k\in\Ne$, $W(x_k)\leq c(x_k)+W(x_{k+1})$.
Hence, for all $H\in\Ne_{>0}$,
\[
W(x)-W(x_H) = \sum_{k=0}^{H-1} W(x_k)-W(x_{k+1}) \leq \sum_{k=0}^{H-1} c(x_k).
\]
Taking the limit when $H\to\infty$ (and using that $x_H\to0$), we get that $W(x)\leq J_\sigma(x)$, where $\sigma:\Ne\to\Mset$ is defined by $\sigma(k)=i_k$.
This implies that $W(x)\leq J(x)$, concluding the proof. 
\end{proof}
This paper focuses on computing upper bounds on the value function.
We employ the dynamic programming inequality of~\cref{prop:bellman-upper} together with the path-complete framework to obtain tractable approximations, represented as combinations of simple functions such as quadratics. Using Proposition~\ref{prop:bellman-lower}, we further establish approximation guarantees, showing that a scaled version of the upper bound serves as a lower bound.

\section{Path-complete upper bound on the value function}\label{sec:path-complete-autonomous}

The path-complete framework consists in combining several functions in a given template of functions (e.g., quadratic functions) in order to compute a function satisfying the dynamic programming inequality in Proposition~\ref{prop:bellman-upper}, thereby providing an upper bound on the value function.\\\\
More precisely, let us consider a path-complete graph $\calG=(\calS,\calE)$ (for $\Mset$) and a template of functions $\calT\subseteq\calF^n_{\geq0}$. Each node $\alpha$ of this graph is associated with a function $V_\alpha\in\calT$. For system~\eqref{eq:switched_controlled} and a cost function $c:\Re^n \to \Re_{\ge0}$, the graph encodes the following inequalities, inspired by~\eqref{eq:bellman-upper}:
\begin{equation}
V_\alpha(x) \ge c(x) + V_\beta(f_i(x)), \quad \forall\,x\in\Re^n,\:\forall\,(\alpha,\beta,i)\in\calE.
\label{eq:ineq_V}
\end{equation}\vskip0pt

\begin{example}
As an illustration, consider the path-complete graph shown in \cref{fig:pc-graph}.
The inequalities associated with its edges are:
\begin{itemize}
    \item Transition $V_1 \to V_1$: $V_1(x) \geq c(x) + V_1(f_1(x))$, $\forall\,x\in\Re^n$,
    \item Transition $V_2 \to V_2$: $V_2(x) \geq c(x) + V_2(f_2(x))$, $\forall\,x\in\Re^n$,
    \item Transition $V_1 \to V_2$: $V_1(x) \geq c(x) + V_2(f_1(x))$, $\forall\,x\in\Re^n$,
    \item Transition $V_2 \to V_1$: $V_2(x) \geq c(x) + V_1(f_2(x))$, $\forall\, x\in\Re^n$.~\hfill$\triangleleft$
\end{itemize}
\end{example}

Given a collection of functions $\{V_\alpha\}_{\alpha\in\calS}\subseteq \calT$ satisfying~\eqref{eq:ineq_V}, 
we aim to construct an upper bound $V$ for the true value function $J$, that is, a function $V$ such that $V(x) \ge J(x)$ for each $x\in \Re^n$, by appropriately combining the functions $V_\alpha$. The specific construction depends on the structure of the underlying graph $\calG=(\calS,\calE)$:
\begin{itemize}
    \item If $\calG$ is \emph{complete}, taking the pointwise \emph{minimum} over the functions $V_\alpha$ yields a valid upper bound.
    \item If $\calG$ is \emph{co-complete}, taking the pointwise \emph{maximum} over the functions $V_\alpha$ yields a valid upper bound.
\end{itemize}
These statements are formalized in the following theorem and are based on previous work extracting common Lyapunov functions from path-complete Lyapunov functions~\cite[Corollaries~3.4 and~3.5]{ahmadi2014joint}.

\begin{theorem}\label{theorem:ub_complete_cocomplete}
Consider system~\eqref{eq:switched_controlled}, a cost function $c:\Re^n\to\Re_{\geq0}$, 
a directed labeled graph $\calG=(\calS,\calE)$ and a template $\calT\subseteq\calF^n_{\geq0}$.
Assume that $\{V_\alpha\}_{\alpha\in\calS}\subseteq\calT$ satisfy~\eqref{eq:ineq_V}.  
Define $V:\Re^n\to\Re_{\ge0}$ by
\begin{subequations} \label{eq:Vcases}
\begin{align}
V(x) &\coloneqq \min_{\alpha\in\calS} V_\alpha(x), \quad \text{if $\calG$ is complete for $\Mset$,} \label{eq:def_V_bar_complete} \\
V(x) &\coloneqq \max_{\alpha\in\calS} V_\alpha(x), \quad \text{if $\calG$ is co-complete for $\Mset$.} \label{eq:def_V_bar_cocomplete}
\end{align}
\end{subequations}
Then, for all $x\in\Re^n$, $V(x)\ge J(x)$, where $J$ is the value function of system~\eqref{eq:switched_controlled} with cost~$c$.
\label{prop:upper_bound_general}
\end{theorem}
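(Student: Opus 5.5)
The plan is to reduce both cases to \cref{prop:bellman-upper}: I will show that the combined function $V$ defined in~\eqref{eq:Vcases} is nonnegative and satisfies the dynamic programming inequality~\eqref{eq:bellman-upper}, i.e., $V(x)\ge c(x)+\max_{i\in\Mset}V(f_i(x))$ for all $x$. The conclusion $V\ge J$ then follows immediately from that proposition. Nonnegativity is clear in both cases, since each $V_\alpha\in\calT\subseteq\calF^n_{\ge0}$ and $\calS$ is finite, so the min and the max are attained and nonnegative.

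\emph{Complete case.} Fix $x\in\Re^n$ and $i\in\Mset$, and pick $\alpha^\star\in\argmin_{\alpha\in\calS}V_\alpha(x)$, so that $V(x)=V_{\alpha^\star}(x)$. Completeness gives a node $\beta$ with $(\alpha^\star,\beta,i)\in\calE$. Then~\eqref{eq:ineq_V} yields
\[
V(x)=V_{\alpha^\star}(x)\ge c(x)+V_\beta(f_i(x))\ge c(x)+\min_{\gamma\in\calS}V_\gamma(f_i(x))=c(x)+V(f_i(x)).
\]
Since $i$ is arbitrary, taking the maximum over $i$ gives~\eqref{eq:bellman-upper}.

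\emph{Co-complete case.} Fix $x$ and $i$, and now pick $\beta^\star\in\argmax_{\beta\in\calS}V_\beta(f_i(x))$, so that $V(f_i(x))=V_{\beta^\star}(f_i(x))$. Co-completeness gives a node $\alpha$ with $(\alpha,\beta^\star,i)\in\calE$. Then
\[
V(x)\ge V_\alpha(x)\ge c(x)+V_{\beta^\star}(f_i(x))=c(x)+V(f_i(x)),
\]
and again we maximize over $i$.

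There is no real obstacle here. The only point requiring care is choosing the right node to fix in each case: in the complete case the minimizing node is fixed at the current state $x$ and the edge is followed forward, while in the co-complete case the maximizing node is fixed at the successor state $f_i(x)$ and the edge is traced backward.

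A remark on hypotheses: path-completeness of $\calG$ is not needed beyond completeness or co-completeness, since each of these already implies path-completeness. Also, if $\calG$ is both complete and co-complete, both definitions in~\eqref{eq:Vcases} are valid upper bounds, so no ambiguity arises.
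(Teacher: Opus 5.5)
Your proposal is correct and follows essentially the same route as the paper: you reduce both cases to \cref{prop:bellman-upper}, following edges forward from the minimizing node in the complete case and backward from the maximizing successor node in the co-complete case. The paper states the same step for all pairs at once, via the set $E$ of node--label pairs that have an edge, before taking the min and max.
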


\begin{proof}
For both the complete and the co-complete cases, we will prove that
\begin{equation}
    V(x)\geq c(x)+\max_{i\in\Mset}V(f_i(x)),\quad \forall\,x\in\Re^n,
    \label{eq:proof_upper_bound2}
\end{equation}  
which, by \cref{prop:bellman-upper}, implies that $V(x)\ge J(x)$ for all $x\in \Re^n$.

\medskip\noindent\textbf{Case 1: $\calG$ is complete.}  
We start with the case of a complete graph, i.e., $V(x)=\min_{\alpha\in\calS}V_\alpha(x)$.
Define the set
\[
E\coloneqq\{(\alpha,i)\in\calS\times\Mset:\ \exists\,\beta \in\calS \text{ such that } (\alpha, \beta,i)\in \calE\}.
\]
From~\eqref{eq:ineq_V}, we deduce that
\begin{equation}
    V_\alpha(x) \ge c(x) + \min_{\beta\in\calS} V_\beta(f_i(x)),\quad 
    \forall\,x\in\Re^n,\ \forall\,(\alpha,i)\in E. 
\end{equation}  
By definition of $V$, 
\begin{equation}
    V_\alpha(x) \ge c(x) + V(f_i(x)),\quad 
    \forall\,x\in\Re^n,\ \forall\,(\alpha,i)\in E.
    \label{eq:proof_upper_bound1}
\end{equation}
Since the graph is complete, it follows from \cref{definition:complete_cocomplete} that $E=\calS\times\Mset$.
Therefore,~\eqref{eq:proof_upper_bound1} can be rewritten as
\begin{equation}
    V_\alpha(x) \ge c(x) + V(f_i(x)),\quad 
    \forall\,x\in\Re^n,\ \forall\,\alpha\in\calS,\ \forall\,i\in\Mset.
\end{equation}
Taking the minimum over $\alpha$ and the maximum over $i$, we obtain~\eqref{eq:proof_upper_bound2}.

\medskip\noindent\textbf{Case 2: $\calG$ is co-complete.}  
We now consider the co-complete case, i.e., $V(x)=\max_{\alpha \in \calS} V_\alpha(x)$.
The reasoning is similar to the complete case.
Define the set
\[
E \coloneqq \{(\beta,i)\in\calS\times\Mset : \exists\,\alpha\in\calS \text{ such that } (\alpha, \beta,i)\in \calE\}.
\]
From~\eqref{eq:ineq_V}, we deduce that
\begin{equation}
   \max_{\alpha\in \calS} V_\alpha(x) \ge c(x) +  V_\beta(f_i(x)),\quad 
   \forall\,x\in\Re^n,\ \forall\,(\beta,i)\in E. 
\end{equation}
By definition of $V$,
\begin{equation}
    V(x)\geq c(x)+V_\beta(f_i(x)),\quad 
    \forall\,x\in\Re^n,\ \forall\,(\beta,i)\in E.
    \label{eq:proof_upper_bound_cocomplete}
\end{equation}
Since the graph is co-complete, it follows from \cref{definition:complete_cocomplete} that $E=\calS\times\Mset$.
Therefore,~\eqref{eq:proof_upper_bound_cocomplete} can be rewritten as
\begin{equation}
    V(x)\geq c(x)+V_\beta(f_i(x)),\quad 
    \forall\,x\in\Re^n,\ \forall\,\beta\in\calS,\ \forall\,i\in\Mset. 
\end{equation}
Taking the maximum over $\beta$ and then over $i$, we obtain~\eqref{eq:proof_upper_bound2}, which concludes the proof.
\end{proof}

\begin{remark}
Note that, in the same way as in \cite{angeli2017}, it is possible to generalize~\cref{theorem:ub_complete_cocomplete} to any path-complete graph, in which case the upper bound takes the form  
\begin{equation}
V(x) = \min_{\mathcal A_1, \dots, \mathcal A_k  \subseteq \calS} \max_{\alpha\in\mathcal A_i} V_{\alpha}(x), 
\end{equation}
where $\mathcal A_1, \ldots, \mathcal A_k$ are nodes of the complete sub-graph of the \emph{observer graph} (see \cite[Theorem~1]{angeli2017}). We skip this result for the sake of clarity and conciseness.\hfill$\triangleleft$
\end{remark}

\section{Results for autonomous switched linear systems}\label{sec:autonomous-linear}

In this section, we apply the framework developed in the previous section to switched linear systems with quadratic cost.
We show that it allows to obtain \emph{tractable} upper bounds on the value function through LMI formulations of the path-complete inequalities~\eqref{eq:ineq_V}.
We further show that the framework is asymptotically non-conservative (in the sense that it can yield upper bounds that converge to the true value function) when the graph is the so-called dual De~Bruijn graph (see~\Cref{def:debruijn} below) whose order tends to infinity.
Finally, we derive an approximation guarantee by appropriately scaling the previously computed upper bound. The scaled function is guaranteed to be a valid lower bound, and the scaling factor is obtained by solving a set of LMIs.\\\\
Concretely, we consider discrete-time autonomous switched \emph{linear} systems of the form
\begin{equation}
x_{k+1} = A_{\sigma(k)} x_k,\quad k\in\Ne,
\label{eq:linear_autonomous}
\end{equation}
where for each $i\in\Mset$, $A_i\in\Re^\nxn$.
Hence,~\eqref{eq:linear_autonomous} is a special case of the system in~\eqref{eq:switched_controlled} where each map $f_i$ is the linear map $x\mapsto A_ix$. We also assume that the cost is quadratic, namely, that $c(x)=x^\top Qx$ for some $Q\in\Re^\nxn_{\succ0}$.
Finally, we consider the template of quadratic functions, that is, $\calT_\txtquad=\{x^\top Px : P\in\Re^\nxn_{\succeq0}\}$.

\subsection{Path-complete LMIs and computational complexity}\label{section:LMIs_formulation_linear_auton_syst}

Under the assumption that $V_\alpha\in\calT_\txtquad$ for each $\alpha\in\calS$, we parametrize them as
\begin{equation}
    V_\alpha(x) = x^\top P_\alpha x, \quad P_\alpha\in\Re^\nxn_{\succeq0},\quad \forall\,\alpha\in\calS.
\end{equation}
Under this parametrization, the path-complete inequalities~\eqref{eq:ineq_V}
can be equivalently written as the following set of LMIs:
\begin{equation}\label{eq:ineq_V_lmi}
    P_\alpha \succeq Q + A_i^\top P_\beta A_i^{}, 
    \quad \forall\,(\alpha,\beta,i)\in\calE.
\end{equation}\vskip0pt

\begin{remark} \label{rem:objective}
Note that any collection of matrices $\{P_\alpha\}_{\alpha\in\calS}\subseteq \Re^\nxn_{\succeq0}$ satisfying~\eqref{eq:ineq_V_lmi} provides an upper bound on the true value function $J$, obtained for each $x$ by taking the minimum or maximum of $\{x^\top P_\alpha x\}_{\alpha\in\calS}$ in the case of a complete or co-complete graph, respectively (see~\cref{theorem:ub_complete_cocomplete}). However, this does not guarantee that the bound is tight. It is therefore crucial to choose an objective function that penalizes large values of the upper bound. For example: 
\begin{itemize}
    \item If one wants to minimize the value of the upper bound at a given $x_0$, one can use the following objective:
    \begin{equation}
        \begin{array}{ll}
        \displaystyle \min_{\{P_\alpha\}_{\alpha \in \calS}} \:\min_{\alpha\in\calS}\: x_0^\top P_\alpha x_0^{} \quad &\text{if $\calG$ is complete}, \label{eq:pointwise-complete} \\[2pt]
        \displaystyle \min_{\{P_\alpha\}_{\alpha \in \calS}}  \:\max_{\alpha\in\calS}\: x_0^\top P_\alpha x_0^{} \quad &\text{if $\calG$ is co-complete}.
        \end{array}
    \end{equation}
    This guarantees that the upper bound is minimal at $x_0$ (among all upper bounds derived from matrices satisfying~\eqref{eq:ineq_V_lmi}).\footnote{Note that the first objective function in~\eqref{eq:pointwise-complete} is not convex in $\{P_\alpha\}_{\alpha\in\calS}$.
    However, it can be easily convexified by considering $\lvert\calS\rvert$ subproblems (one with objective function $x_0^\top P_\alpha x_0^{}$ for each $\alpha\in\calS$, see Appendix~\ref{app:dcdc} for a specific example).}
    \item On the contrary, if one wants that the upper bound is ``globally'' small, i.e., for a large number of values of $x$, one can use the objective: 
    \begin{equation}
       \min_{\{P_\alpha\}_{\alpha \in \calS}} \: \sum_{\alpha\in\mathcal{S}} \ell(P_\alpha),
       \label{eq:obj_log_det}
    \end{equation}
    where $\ell(P)$ is any function that penalizes ``large'' values of $P$,\footnote{For instance, $\ell(P)=\log(\det(P))$, $\ell(P)=\mathrm{trace}(P)$ or $\ell(P)=\lVert P\rVert^2$, which can all be formulated as semidefinite programs.} so as to yield small values of the upper bound on the whole state space.
\end{itemize}
Note that other objective functions can be considered, for instance if one wants to minimize the expected value of the upper bound (for some given state distribution).
We leave the study of other objective functions to further work.\hfill$\triangleleft$
\end{remark}
As an illustrative example, we apply the path-complete approach to a simple switched linear system.
\begin{example}
Consider system~\eqref{eq:linear_autonomous} with
\begin{equation}
A_1 = \frac{1}{1.75}
\begin{bmatrix}
1.3 & 0 \\
1 & 0.3
\end{bmatrix}, 
\quad
A_2 = \frac{1}{1.75}
\begin{bmatrix}
-0.3 & 1 \\
0 & -1.3
\end{bmatrix}.\label{eq:matrices_simple_ex}
\end{equation}
Taking the objective function in~\eqref{eq:obj_log_det} with $\ell=\mathrm{trace}$, using $Q=I$, and considering the LMIs~\eqref{eq:ineq_V_lmi} on the co-complete graph depicted in~\cref{fig:pc-graph}, we obtain the following solution:
\[
P_1 = 
\begin{bmatrix}
3.32 & 0.14 \\
0.14 & 1.14
\end{bmatrix}, \quad
P_2 = 
\begin{bmatrix}
1.14 & -0.14 \\
-0.14 & 3.32
\end{bmatrix}.
\]
The upper bound $V(x)=\max_{\alpha\in\{1,2\}} x^\top P_\alpha x$ is shown in \cref{fig:illustrative_example} against the true value function (given for reference only and approximated by truncating the infinite-horizon sum at a horizon \(H\) such that the cumulative cost beyond \(H\) is negligible).~\hfill$\triangleleft$
\end{example}

\begin{figure}[h]
\centering
\includegraphics[width=0.7\linewidth]{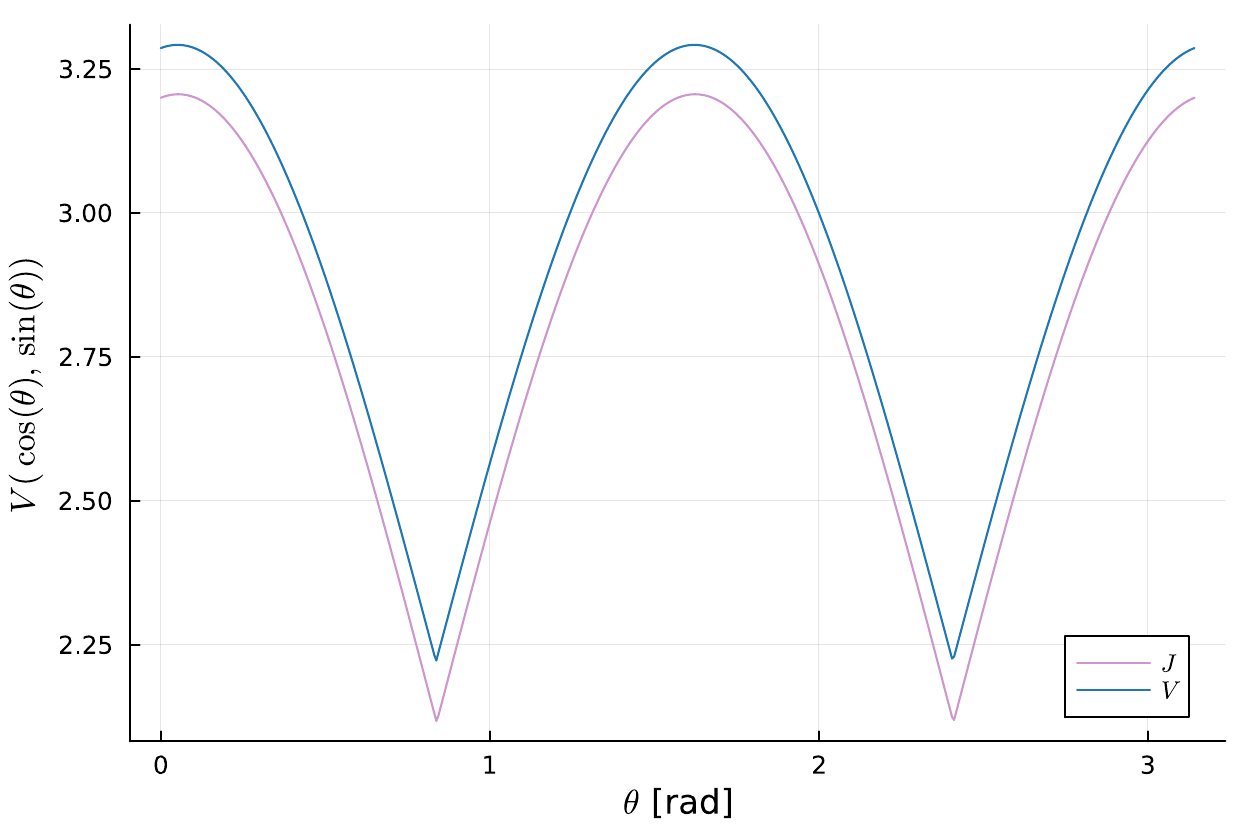}
\caption{Upper bound 
$V(x) = \max_{\alpha\in\{1,2\}} x^\top P_\alpha x$ on the value function of the switched linear system with matrices in~\eqref{eq:matrices_simple_ex}, plotted along the unit circle $x=(\cos(\theta), \sin(\theta))$ for $\theta\in[0,\pi]$.  
The bound is computed using the path-complete framework with the co-complete graph in \cref{fig:pc-graph}, and the true value function $J$ is shown for comparison.}

\label{fig:illustrative_example}
\end{figure}
The LMIs formulation~\eqref{eq:ineq_V_lmi} provides an efficient way to compute upper bounds on the value function of switched linear systems with a quadratic cost. Indeed, solving these LMIs can be done in time polynomial in $n$, $\lvert\calS\rvert$ and $\lvert\calE\rvert$, using interior-point algorithms:

\begin{proposition}[Complexity]
Consider the objective function~\eqref{eq:obj_log_det} with $\ell=\mathrm{trace}$. Then, the corresponding semidefinite program defined by the LMIs~\eqref{eq:ineq_V_lmi} can be solved in
\[
\mathcal O\left(\left(1+|\calE| n\right)^{\frac{1}{2}}n^5 |\calS| \left (n|\calS|^2+n|\calS ||\calE|+|\calE|\right)\right)
\]
basic arithmetic operations.
\end{proposition}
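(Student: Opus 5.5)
The plan is to cast the problem as a standard-form semidefinite program and apply the classical worst-case complexity bound for primal--dual (or path-following) interior-point methods, for instance the one in Ben-Tal and Nemirovski's lectures on modern convex optimization. For an SDP with $m$ scalar decision variables and a block-diagonal constraint consisting of blocks of sizes $k_1,\dots,k_p$, that bound has two parts. First, the number of Newton iterations needed to reach accuracy $\varepsilon$ is $\mathcal O(\sqrt{\nu}\log(1/\varepsilon))$, where $\nu=1+\sum_j k_j$ is the barrier parameter. Second, each iteration costs
\[
\mathcal O\Bigl(m\bigl(m^2+m\textstyle\sum_j k_j^2+\sum_j k_j^3\bigr)\Bigr)
\]
arithmetic operations. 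These come from assembling the Schur complement (Hessian) system and solving it. I would quote this result with the accuracy factor $\log(1/\varepsilon)$ suppressed, as the statement implicitly does, and then count the parameters of our specific SDP.

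The decision variables are the symmetric matrices $P_\alpha$, $\alpha\in\calS$. Each has $n(n+1)/2$ free entries, so $m=\mathcal O(n^2|\calS|)$. The objective $\sum_\alpha \mathrm{trace}(P_\alpha)$ is linear, so it adds no conic block. The constraints~\eqref{eq:ineq_V_lmi} give $|\calE|$ blocks of size $n$, each affine in the $P$'s.

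The positivity constraints $P_\alpha\succeq0$ need separate treatment. Since $\calG$ is complete or co-complete, every node has an outgoing or an incoming edge. For a node $\alpha$ with an outgoing edge, the edge constraint gives $P_\alpha\succeq Q+A_i^\top P_\beta A_i\succeq Q\succ0$, provided the $P_\beta$ on the right is itself PSD. I would argue that in the complete case every node has an outgoing edge, so all $P_\alpha\succeq 0$ hold at any feasible point through a fixed-point/chain argument. Otherwise I would simply append these blocks. Appending them changes $\nu$ to $1+n(|\calE|+|\calS|)$. Since path-completeness with $M\ge1$ forces $|\calE|\ge|\calS|$, this does not change the asymptotic order. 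In either case $\nu=\mathcal O(1+|\calE|n)$, so the iteration count is $\mathcal O((1+|\calE|n)^{1/2})$.

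It then remains to substitute into the per-iteration cost with $k_j=n$ for $|\calE|$ blocks:
\[
m^3=n^6|\calS|^3,\qquad m^2\textstyle\sum_j k_j^2=n^4|\calS|^2\cdot|\calE|n^2=n^6|\calS|^2|\calE|,\qquad m\sum_j k_j^3=n^2|\calS|\cdot|\calE|n^3=n^5|\calS||\calE|.
\]
Factoring out $n^5|\calS|$ gives $n^5|\calS|(n|\calS|^2+n|\calS||\calE|+|\calE|)$. Multiplying by the iteration count yields the claimed bound.

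I expect the main obstacle to be stating precisely which interior-point complexity theorem is used, including its per-iteration cost formula and the assumptions it needs. Those assumptions are strict feasibility, a known bounded starting point, and treating $\log(1/\varepsilon)$ and the data-size factors as constants. I also expect care to be needed in justifying that the $P_\alpha\succeq0$ blocks either are redundant or do not alter the order of $\nu$.
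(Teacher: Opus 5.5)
Your argument is correct and is the paper's own: it applies the interior-point complexity bound of Ben-Tal--Nemirovski (Section~6.6.3) to an SDP with $|\calE|$ LMI blocks of size $n$ and $\mathcal O(n^2|\calS|)$ decision variables, and your substitution reproduces the stated expression. The paper ignores the constraints $P_\alpha\succeq0$, and for these you should use your fallback of appending the $|\calS|$ extra blocks, which leaves both $\nu$ and the per-iteration cost unchanged in order because $|\calS|\le|\calE|$ for complete or co-complete graphs; do not rely on the chain argument, since without stability the edge LMIs do not force positivity (a single node with a self-loop, $M=1$, $A_1=2I$, $Q=I$ admits $P_\alpha=-\tfrac13 I$).
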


\begin{proof}
Since the objective function is linear, the result follows directly from the complexity analysis in~\cite[Section~6.6.3]{ben2001lectures}, applied to a problem with $|\calE|$ LMIs involving matrices of size $n \times n$ and $n^2|\mathcal{S}|$ decision variables.
\end{proof}

Besides the efficiency of computation, the question of the accuracy (or tightness) of the computed upper bound is central.
This question is addressed in the next two subsections.
\subsection{Convergence on dual De Bruijn graphs} \label{sec:debruijn}

De Bruijn graphs form a hierarchical family of complete graphs, 
originally introduced in~\cite{de1948combinatorial}.
Their definition is recalled below:

\begin{definition}[De Bruijn] \label{def:debruijn}
The \emph{(primal) De Bruijn graph} of order $l\in\Ne$ on $\Mset$, denoted by
$\calH_l(M)=(\calS_l,\calE_l)$, is defined as follows: $\calS_l=\Mset^l$, and $(\alpha,\beta,i)\in\calE_l$ if and only if $\alpha=(j_1,\ldots,j_l)$ and $\beta=(i,j_1,\ldots,j_{l-1})$ for some $(j_1,\ldots,j_l)\in\Mset^l$.
\end{definition}

The De Bruijn graph $\calH_l(M)$ is complete, and its dual, denoted by $\calH_l^\top(M)$, is co-complete. Below, we establish a convergence result for dual De Bruijn graphs:

\begin{theorem}\label{thm:convergence}
Consider system~\eqref{eq:linear_autonomous} and a quadratic cost function $c(x)=x^\top Qx$ with $Q\in\Re^\nxn_{\succ0}$.
Assume that system~\eqref{eq:linear_autonomous} is asymptotically stable.
For each $l\in\Ne$, let $\calH_l^\top(M) = (\calS_l, \calE_l)$ be the dual De Bruijn graph of order~$l$ on $\Mset$ (see~\cref{def:debruijn}).
Then, there exists a sequence of sets of quadratic functions 
\(\{\{V_\alpha^l\}_{\alpha \in \calS_l}\}_{l \in \Ne}\) such that for each $l$, \(\{V_\alpha^l\}_{\alpha\in\calS_l}\subseteq \calT_\txtquad\) satisfies~\eqref{eq:ineq_V} with $\calE=\calE_l$, and
\[
\lim_{l\to\infty} V^l(x) = J(x),
\quad 
V^l(x) \coloneqq \max_{\alpha\in\calS_l} V_\alpha^l(x),
\quad 
\forall\,x\in\Re^n,
\]
where $J$ is the value function of system~\eqref{eq:linear_autonomous} with cost $c$.
\end{theorem}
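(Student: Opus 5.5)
We construct the functions explicitly and use Theorem~\ref{theorem:ub_complete_cocomplete} only to know that $V^l\ge J$. It then suffices to show $\limsup_l V^l(x)\le J(x)$.

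\textbf{Construction.} In the dual graph $\calH_l^\top(M)$, an edge $(\alpha,\beta,i)$ exists exactly when $\beta=(j_1,\dots,j_l)$ and $\alpha=(i,j_1,\dots,j_{l-1})$. We read a node $\alpha=(a_1,\dots,a_l)$ as a memory of the most recent modes, with $a_1$ the latest. The natural candidate attaches to $\alpha$ the worst-case cost incurred after having just applied the modes of $\alpha$ in reverse. To make this well posed, let $P^\star$ be the matrix of a common quadratic upper bound, i.e.\ $P^\star\succeq Q+A_i^\top P^\star A_i$ for all $i$. This exists if the system admits a common quadratic Lyapunov function; the general case is treated at the end.

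For $\alpha=(a_1,\dots,a_l)$ we define the finite-horizon worst-case function along fixed suffixes: $V^l_\alpha(x)$ is the largest quadratic over a family of sequences. Since a max of quadratics is not quadratic, we instead use a single quadratic per node. For each word $w=(w_1,\dots,w_l)$ define
\[
P_w \coloneqq \sum_{k=0}^{l-1} A_{w_{k}\cdots w_1}^\top Q A_{w_k\cdots w_1} + A_{w}^\top P^\star A_{w},
\]
where $A_{w_k\cdots w_1}=A_{w_k}\cdots A_{w_1}$ and $A_w=A_{w_l}\cdots A_{w_1}$. Then $x^\top P_w x$ is the cost of applying the modes $w_1,\dots,w_l$ in this order, plus the tail bound $P^\star$. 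Setting $V^l_\alpha(x)=x^\top P_\alpha x$ gives
\[
\max_\alpha V^l_\alpha(x)=\max_{w\in\Mset^l}\Bigl(\sum_{k<l}c(x_k^w)+V^\star(x_l^w)\Bigr),
\]
which is the $l$-step dynamic programming iterate $T^l V^\star$ of $V^\star(x)=x^\top P^\star x$, with $T V=c+\max_i V\circ A_i$.

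\textbf{Edge inequalities.} The edge $(\alpha,\beta,i)$ with $\alpha=(i,j_1,\dots,j_{l-1})$ and $\beta=(j_1,\dots,j_l)$ requires $P_\alpha\succeq Q+A_i^\top P_\beta A_i$. The right-hand side is the cost of applying $i$ first and then $j_1,\dots,j_l$, which is $l+1$ steps, while $P_\alpha$ covers $i,j_1,\dots,j_{l-1}$ followed by the tail $P^\star$. The difference reduces to $A^\top(P^\star-Q-A_{j_l}^\top P^\star A_{j_l})A\succeq0$ with $A=A_{j_{l-1}}\cdots A_{j_1}A_i$, so the inequality holds by the choice of $P^\star$. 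One must double-check the orientation of the edges: in the dual graph, $\alpha$ must be the word that is shifted by prepending $i$. If the indices come out reversed, we reverse the ordering convention in $P_w$.

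\textbf{Convergence.} Since $V^\star\ge T V^\star\ge J$, the iterates $T^lV^\star$ decrease monotonically in $l$ and stay above $J$. Moreover,
\[
T^lV^\star(x)\le \sup_\sigma \sum_{k<l}c(x_k)+\max_{w}V^\star(A_w x)\le J(x)+\lambda_{\max}(P^\star)\max_{w\in\Mset^l}\|A_w\|^2\|x\|^2.
\]
Asymptotic stability of a switched linear system under arbitrary switching is equivalent to a joint spectral radius below $1$, so $\max_w\|A_w\|\to0$. Hence $V^l(x)\to J(x)$.

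\textbf{Main obstacle.} The hard step is the existence of $P^\star$: stability does not imply a common quadratic Lyapunov function. To handle this, we use that a joint spectral radius below $1$ gives some $N$ with $\max_{|w|=N}\|A_w\|<1/2$. We then take
\[
P^\star=\gamma I+\sum_{\text{finite products } |w|<N} (\cdot),
\]
or more simply $V^\star(x)=\sup_\sigma\sum_k c(x_k)\le C\|x\|^2$, bounded by $J\le C\|x\|^2$ with $C$ finite by exponential stability. We set $P^\star=C' I$ with $C'$ large. The inequality $P^\star\succeq Q+A_i^\top P^\star A_i$ may still fail for this choice. The fallback is to choose $P^\star$ from the quadratic Lyapunov function of a lifted system, which exists by the joint spectral radius approximation theorem for $N$-step products, and to replace the single tail term by a sum of tails.

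Alternatively, we drop the tail entirely and use $P_w=\sum_{k<l}A^\top_{(k)}QA_{(k)}+A_w^\top P^\star A_w$, where $P^\star$ only needs to dominate $Q+\max_i A_i^\top P^\star A_i$ after multiplication by $A_w$ for long $w$. The contraction of long products should absorb the defect, possibly by offsetting the inequality with an $\epsilon$-slack weighting. This is the step we expect to be delicate.
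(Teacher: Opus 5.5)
\textbf{There is a genuine gap: everything rests on a matrix $P^\star$ that need not exist.} Your construction needs $P^\star\succeq Q+A_i^\top P^\star A_i$ for every $i\in\Mset$. Because $Q\succ0$, such a $P^\star$ is exactly a common quadratic Lyapunov function, and any common quadratic Lyapunov function can be rescaled to satisfy the inequality. Asymptotic stability under arbitrary switching does not guarantee that one exists.

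So you have only proved the theorem when a common quadratic Lyapunov function exists. In that case your argument is correct, and it even gives monotone convergence $V^l=T^lV^\star\downarrow J$. None of your fallbacks closes the gap:
\begin{itemize}
\item The choice $P^\star=C'I$ fails as soon as some $\lVert A_i\rVert_2\geq1$, which stable switched systems allow.
\item A Lyapunov function for the lifted $N$-step system does not give the \emph{single-step} inequality that every edge of $\calH_l^\top(M)$ imposes on the tail.
\item The idea that long products ``absorb the defect'' cannot work as phrased. The edge defect $x_l^\top(P^\star-Q-A_{j_l}^\top P^\star A_{j_l})x_l$ with $x_l=Ax$ is homogeneous in $x_l$. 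How small $\lVert A\rVert$ is cannot turn a negative value into a nonnegative one while you compare against zero. For invertible $A$ the requirement is again just $P^\star\succeq Q+A_{j_l}^\top P^\star A_{j_l}$.
\end{itemize}

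\textbf{The missing idea is to take the slack from the first-stage cost $c(x)$ and pay for it by rescaling.} This is what the paper does. Any fixed quadratic tail then works; the paper takes $P^\star=Q$, so that $x^\top P_\alpha x=\sum_{k=0}^{l}c(x_k)$ along the word $\alpha$. Your own edge computation then gives
\[
x^\top P_\alpha x = c(x)+x^\top A_i^\top P_\beta A_i x-c(x_{l+1})\geq x^\top A_i^\top P_\beta A_i x+(1-\eta_l)\,c(x),
\]
where
\[
\eta_l\coloneqq\sup_{x\neq0,\sigma}\frac{c(\xi(l+1,x,\sigma))}{c(x)}\leq\frac{\lambda_{\mathrm{max}}(Q)}{\lambda_{\mathrm{min}}(Q)}C^2\rho^{2l+2}\to0
\]
by Lemma~\ref{lem:stable}. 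For $l\geq L$ with $\eta_l<1$, divide every $P_\alpha$ by $1-\eta_l$; this yields~\eqref{eq:ineq_V} exactly.

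Each finite-horizon sum along a fixed word is at most $J(x)$, so together with Theorem~\ref{theorem:ub_complete_cocomplete} you get $J\leq V^l\leq J/(1-\eta_l)$, and the limit follows. Dropping the tail altogether also works with this trick, using $\eta_{l-1}$ in place of $\eta_l$. So your ``drop the tail plus $\epsilon$-slack'' remark pointed the right way, but it was never carried out, and the exact tail inequality is the part to abandon. Note that this route, like the paper's, only produces feasible functions for $l\geq L$, which is all the limit needs.
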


\begin{remark}
Theorem~\ref{thm:convergence} guarantees the existence of a sequence of sets of quadratic functions 
\(\{\{V_\alpha^l\}_{\alpha \in \calS_l}\}_{l \in \Ne}\), each set satisfying the inequalities~\eqref{eq:ineq_V} on $\calH^\top_l(M)$, 
such that the corresponding sequence of functions 
\(V^l(x) = \max_{\alpha \in \calS_l} V_\alpha^l(x)\) converges to the true value function \(J(x)\).  
While this ensures that a non-conservative feasible solution exists in the limit, the solution obtained by solving the actual optimization problem may be conservative, as it depends on the chosen objective function; see Remark~\ref{rem:objective}.\hfill$\triangleleft$
\end{remark}

The proof of \cref{thm:convergence} uses the following lemma, which is a consequence of the system being asymptotically stable:

\begin{lemma}{{\cite[Proposition~2.13]{sun2011stability}}}\label{lem:stable}
Consider system~\eqref{eq:linear_autonomous}, and assume that it is asymptotically stable. Then, there are $0\leq\rho<1$ and $C\geq0$ such that for all $x\in\Re^n$, $\sigma:\Ne\to\Mset$ and $k\in\Ne$, $\lVert\xi(k,x,\sigma)\rVert\leq C\rho^k\lVert x\rVert$. 
\end{lemma}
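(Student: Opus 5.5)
The plan is to go through the joint spectral radius $\hat\rho\coloneqq\lim_{k\to\infty}\max_{w\in\Mset^k}\lVert A_{w_k}\cdots A_{w_1}\rVert^{1/k}$. I will show that asymptotic stability under arbitrary switching forces $\hat\rho<1$, and then read off the exponential bound.

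\textbf{From $\hat\rho<1$ to the bound.} Assuming $\hat\rho<1$, fix $\rho\in(\hat\rho,1)$. By definition of $\hat\rho$ as a limit, there is $K$ such that every product of length $k\geq K$ has norm at most $\rho^k$. Set
\[
C\coloneqq\max_{0\le k<K}\ \max_{w\in\Mset^k}\lVert A_w\rVert\rho^{-k}\ \ (\geq 1),
\]
where $A_w\coloneqq A_{w_k}\cdots A_{w_1}$ and the empty product is the identity. Since $\xi(k,x,\sigma)=A_{\sigma(k-1)}\cdots A_{\sigma(0)}x$, we get $\lVert\xi(k,x,\sigma)\rVert\leq C\rho^k\lVert x\rVert$ for all $k$, $x$ and $\sigma$. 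This step is routine.

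\textbf{Stability implies $\hat\rho<1$.} I argue by contradiction and suppose $\hat\rho\geq1$. First reduce to the irreducible case. Choose a basis in which all $A_i$ are simultaneously block upper triangular with irreducible diagonal blocks $A_i^{(1)},\dots,A_i^{(r)}$. It is standard that $\hat\rho(\{A_i\})=\max_j\hat\rho(\{A_i^{(j)}\})$, so some block $j$ has $\hat\rho_j\geq1$. Let $U$ be the invariant subspace spanned by blocks $1,\dots,j-1$. Then block $j$ is the restriction to the invariant subspace spanned by blocks $1,\dots,j$ of the quotient dynamics on $\Re^n/U$. The quotient map $x\mapsto x+U$ sends trajectories of \eqref{eq:linear_autonomous} to trajectories of the quotient system, and it is continuous. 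Hence a non-convergent trajectory of the block-$j$ system lifts to a non-convergent trajectory of the full system: take any preimage of the initial condition and the same switching signal.

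\textbf{Irreducible case (main obstacle).} For an irreducible family with $\hat\rho_j\geq1$, I would invoke Barabanov's theorem. It gives a norm $\lVert\cdot\rVert_*$ with $\max_i\lVert A_i^{(j)}y\rVert_*=\hat\rho_j\lVert y\rVert_*$ for every $y$. Starting from any $y_0\neq0$ and choosing greedily $\sigma(k)\in\argmax_i\lVert A_i^{(j)}y_k\rVert_*$ yields $\lVert y_k\rVert_*=\hat\rho_j^k\lVert y_0\rVert_*\geq\lVert y_0\rVert_*>0$. This trajectory does not converge to $0$, contradicting asymptotic stability. The delicate points are exactly here: existence of the extremal (Barabanov) norm, which needs irreducibility, and the block-triangular reduction.

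If I prefer to avoid Barabanov, I would try a compactness/König argument instead, though it is less direct. Suppose for every $N$ some product of length $N$ has norm at least $1/2$. The aim is to extract a single infinite signal and a single unit vector whose trajectory stays away from $0$. The difficulty is that norms of prefixes of long products need not stay large, so a naive tree argument fails. For this reason I would go with the Barabanov route.
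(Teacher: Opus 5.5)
Your proof is correct. It is not the paper's route, because the paper gives no argument for this lemma and simply imports it from \cite[Proposition~2.13]{sun2011stability}. Your self-contained version through the joint spectral radius works as written. The step from $\hat\rho<1$ to the bound $C\rho^k$ is routine, since the limit exists by Fekete's lemma. The block-triangular reduction and the lift of a non-convergent block-$j$ trajectory are sound: a preimage lies in the span of blocks $1,\dots,j$, and there the $j$-th component evolves exactly by $A_i^{(j)}$. Barabanov's norm then gives the greedy trajectory with $\lVert y_k\rVert_*=\hat\rho_j^k\lVert y_0\rVert_*$. You use only attractivity, which is exactly what the paper means by ``asymptotically stable''. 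Your route buys the sharp rate, since any $\rho\in(\hat\rho,1)$ works. It costs Barabanov's extremal-norm theorem and the irreducible decomposition, which are far deeper than the lemma needs. Your worry about the compactness route only applies if you run it by contradiction; run directly, it is elementary. Consider the sets $O_N=\{(x,\sigma):\exists\,k\leq N,\ \lVert\xi(k,x,\sigma)\rVert<\tfrac12\}$ in the compact space $S^{n-1}\times\Mset^{\Ne}$ with the product topology. Each $O_N$ is open, because $\xi(k,x,\sigma)$ is continuous in $x$ and depends only on $\sigma(0),\dots,\sigma(k-1)$. The $O_N$ increase with $N$ and cover the space by attractivity, so a single $N$ suffices: from any unit vector and under any signal, the norm drops below half within $N$ steps. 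Iterate this using linearity and shifted signals, and set $L=\max_{k\leq N}\max_{w\in\Mset^k}\lVert A_w\rVert$ with $A_w$ as in your notation. This gives $\lVert\xi(k,x,\sigma)\rVert\leq 2L\,2^{-k/N}\lVert x\rVert$, which is the lemma with $\rho=2^{-1/N}$ and $C=2L$. It needs no irreducibility and no extremal norm, at the price of a non-sharp rate.
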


\begin{proof}[Proof of Theorem~\ref{thm:convergence}]
For each $l\in\Ne$, define
\[
\eta_l \coloneqq \max_{x\in\Re^n\setminus\{0\}}\max_{\sigma:\Ne\to\Mset}\frac{\xi(l+1,x,\sigma)^\top Q\xi(l+1,x,\sigma)}{x^\top Qx}.
\]
By Lemma~\ref{lem:stable}, it holds that for all $l\in\Ne$, $\eta_l\leq \frac{\lambda_{\mathrm{max}}(Q)}{\lambda_{\mathrm{min}}(Q)}C^2\rho^{2l+2}$. 
This implies that there exists $L\in \Ne$ such that, for all $l \geq L$, $\eta_l < 1$. We will show that:\\\\
\emph{Claim~1:} For all $l \geq L$, there is $\{V^l_\alpha\}_{\alpha\in\calS_l}\subseteq\calT_\txtquad$ satisfying~\eqref{eq:ineq_V} with $\calE=\calE_l$ and such that $V^l(x)\leq\frac1{1-\eta_l} J(x)$ for all $x\in\Re^n$, where $V^l(x)\coloneqq\max_{\alpha\in\calS_l} V_\alpha^l(x)$.\\\\
\emph{Proof of Claim~1: }
Fix $l\geq L$.
For each $\alpha=(i_0,\ldots,i_{l-1})\in\calS_l$, define $\widetilde V_\alpha^l:\Re^n\to\Re_{\geq0}$ by
\begin{equation}\widetilde V_\alpha^l(x)\coloneqq\sum_{k=0}^{l}\xi(k,x,\sigma_\alpha)^\top Q\xi(k,x,\sigma_\alpha),
\label{eq:def_V_tilde}
\end{equation}
where $\sigma_\alpha:\Ne\to\Mset$ satisfies $\sigma_\alpha(k)=i_k$ for all $k\in\{0,\ldots,l-1\}$.
It is clear that for each $\alpha\in\calS_l$, $\widetilde V_\alpha^l\in\calT_\txtquad$.
Also, note that for all $x\in\Re^n$ and $\alpha\in\calS_l$, $\widetilde{V}^l_\alpha(x)\leq J(x)$.
Hence,
\begin{equation}\max_{\alpha\in\calS_l}\widetilde{V}^l_\alpha(x)\leq J(x),\quad \forall \, x \in \Re^n.\label{eq:proof_conv}
\end{equation}
Now, for each $\alpha\in\calS_l$, let us define $V_\alpha^l\coloneqq\frac1{1-\eta_l}\widetilde{V}_\alpha^l$.
From~\eqref{eq:proof_conv}, it follows that
\[
V^l(x)\coloneqq \max_{\alpha \in \calS_l} V_\alpha^l(x) \le \frac{1}{1 - \eta_l} J(x), \quad \forall \, x \in \Re^n.
\]
To conclude the proof of Claim~1, it remains to show that $\{V^l_\alpha\}_{\alpha\in\calS_l}$ satisfies~\eqref{eq:ineq_V} with $\calE=\calE_l$.
Therefore, let us consider an arbitrary edge $(\alpha,\beta,i_0)\in \calE_l$.
Since we consider a dual De Bruijn graph, we have $\alpha=(i_0,i_1,\ldots,i_{l-1})$ and $\beta=(i_1,\ldots, i_l)$ for some $(i_1,\ldots,i_l)\in\Mset^l$.
By~\eqref{eq:def_V_tilde}, it holds that
\begin{align*}
&\widetilde V_\alpha^l(x)=\widetilde V_\beta^l(A_{i_0}x)+x^\top Qx - \xi(l+1,x,\bar\sigma)^\top Q\xi(l+1,x,\bar\sigma), \quad \forall\,x\in\Re^n,
\label{eq:eq1_thm1}
\end{align*}
where $\bar\sigma:\Ne\to\Mset$ satisfies $\bar\sigma(k)=i_k$ for $k\in\{0,\ldots,l\}$.
Hence, from the definition of $\eta_l$, it follows that
\begin{equation}
\widetilde V_\alpha^l(x)\geq\widetilde V_\beta^l(A_{i_0}x)+(1-\eta_l)x^\top Qx,\quad \forall\,x\in\Re^n.\label{eq:proof_conv2}
\end{equation}
Using $V_\alpha^l\coloneqq \frac1{1-\eta_l}\widetilde{V}_\alpha^l$ and $V_\beta^l\coloneqq\frac1{1-\eta_l}\widetilde{V}_\beta^l$, we obtain
\[
V_\alpha^l(x) \ge V_\beta^l(A_{i_0} x) + x^\top Q x, \quad \forall\, x \in \Re^n,
\]
which is~\eqref{eq:ineq_V}.
Since the edge $(\alpha,\beta,i_0)$ was arbitrary, this concludes the proof of the claim.~\hfill$\lrcorner$\\\\
We use Claim~1 to conclude the proof of the theorem.
For each $l\geq L$, let $\{V^l_\alpha\}_{\alpha\in\calS_l}$ and $V^l$ be as in
Claim~1.
The claim states that for each $l\geq L$, $\{V^l_\alpha\}_{\alpha\in\calS_l}$ satisfies~\eqref{eq:ineq_V} with $\calE=\calE_l$ and $(1-\eta_l) V^l(x) \leq J(x) \leq V^l(x)$ (where the second inequality is obtained from \cref{theorem:ub_complete_cocomplete} since $V^l(x)=\max_{\alpha\in\calS_l} V^l_\alpha(x)$ and $\calH_l^\top(M)$ is co-complete).
Since $\lim_{l\to\infty} \eta_l = 0$, it follows that $\lim_{l\to\infty} V^l(x) = J(x)$, which concludes the proof of the theorem.
\end{proof}

\subsection{Relative accuracy of the upper bounds} \label{sec:approximation-guarantees}

In this subsection, we present an algorithmic framework to assess the \emph{relative tightness} of the upper bounds derived from Theorem~\ref{theorem:ub_complete_cocomplete} for switched linear systems with quadratic cost and quadratic template. Specifically, given an upper bound $V$ of the form~\eqref{eq:def_V_bar_complete} or~\eqref{eq:def_V_bar_cocomplete} with $\calT=\calT_\txtquad$, we provide a tractable approach to compute a factor $\mu\geq1$ such that 
\begin{equation}
    \frac{1}{\mu} V(x) \leq J(x) \leq V(x),\quad \forall\, x \in \Re^n.
\end{equation}
To this end, we determine the smallest $\mu$ such that $W \coloneqq \frac{1}{\mu} V$ satisfies the dynamic programming inequality~\eqref{eq:bellman-lower}.
This implies that $W$ constitutes a \emph{lower bound} on the true value function $J$.
Concretely, for a switched system~\eqref{eq:switched_controlled} and a cost $c$, given a previously computed upper bound $V : \Re^n \to \mathbb{R}_{\ge 0}$, the problem reduces to finding the smallest $\mu\geq1$ such that
\begin{equation}\label{eq:accuracy-max}
    V(x) \le \mu\, c(x) + \max_{i \in \Mset} V(f_i(x)), 
    \quad \forall\,x \in \Re^n.
\end{equation}
For both forms of $V$, namely \eqref{eq:def_V_bar_complete} and \eqref{eq:def_V_bar_cocomplete}, we explain below how this can be done efficiently using semidefinite programming when the system is switched linear, the cost quadratic and the template quadratic.

\subsubsection{The max case}

Assume first that $V$ has the form~\eqref{eq:def_V_bar_cocomplete} with quadratic functions, that is, $V(x) = \max_{\alpha \in \calS} V_\alpha(x)$, where $\calS$ is a finite set, and for each $\alpha \in \calS$, $V_\alpha(x) = x^\top P_\alpha x$ with $P_\alpha \in \Re^{n \times n}_{\succeq 0}$.
Consider the switched linear system~\eqref{eq:linear_autonomous} and the quadratic cost $c(x) = x^\top Q x$ with $Q \in \Re^{n \times n}_{\succ 0}$.
To solve~\eqref{eq:accuracy-max} in this setting, we first need to reformulate it in a tractable way.
To this end, we draw inspiration from the \emph{S-procedure}~\cite{boyd2004convex}.\footnote{
The S-procedure allows to rewrite (conservatively) an implication of the form 
$\bigwedge_{i=1}^m x^\top Q_i x \ge 0 \Rightarrow x^\top R x \ge 0$ 
as the existence of nonnegative multipliers $t_1,\ldots,t_m \ge 0$ such that 
$R \succeq \sum_{i=1}^m t_i Q_i$.}
In particular, for each $(\gamma,\alpha,i,\beta,j)\in\calS\times\calS\times\Mset\times\calS\times\Mset$, we introduce a multiplier $t_{\gamma,\alpha,i,\beta,j}\geq0$.
We then consider the condition
\begin{align}
&V_\gamma(x)\leq \mu c(x) + V_\alpha(f_i(x))+\sum_{(\beta,j)\in\calS\times\Mset} t_{\gamma,\alpha,i,\beta,j}\{V_\beta(f_j(x)) - V_\alpha(f_i(x))\}, \nonumber \\
&\hspace{7.5cm}\forall\,x\in\Re^n,\:\forall\,(\gamma,\alpha,i)\in\calS\times\calS\times\Mset. \label{eq:accuracy-ineq-max}
\end{align}
With the hypotheses above on the system (i.e., $f_i(x)=A_ix$ for each $i\in\Mset$), the cost function $c$ and $\{V_\alpha\}_{\alpha\in\calS}$,~\eqref{eq:accuracy-ineq-max} can be formulated as a set of LMIs:
\begin{align}
&P_\gamma \preceq \mu Q + A_i^\top P_\alpha A_i^{} + \sum_{(\beta,j)\in\calS\times\Mset} t_{\gamma,\alpha,i,\beta,j}\big(A_j^\top P_\beta A_j^{} - A_i^\top P_\alpha A_i^{}\big),
\nonumber \\
&\hspace{7.5cm}\forall\,x\in\Re^n,\:\forall\,(\gamma,\alpha,i)\in\calS\times\calS\times\Mset.
\label{eq:lb-cocomplete}
\end{align}
Hence, it can be solved efficiently using interior-point methods. The following result states that any feasible solution to the simplified constraints~\eqref{eq:accuracy-ineq-max} provides a feasible solution to the original constraints~\eqref{eq:accuracy-max}:

\begin{proposition}\label{prop:lower-cocomplete}
Given system~\eqref{eq:switched_controlled}, $\{V_\alpha\}_{\alpha\in\calS}\subseteq \calT$ and a cost function $c:\Re^n\to\Re_{\geq0}$, let $\mu\geq1$ and
\[
\{t_{\gamma,\alpha,i,\beta,j}\}_{(\gamma,\alpha,i,\beta,j)\in\calS\times\calS\times\Mset\times\calS\times\Mset}\subseteq\Re_{\geq0}
\]
satisfy~\eqref{eq:accuracy-ineq-max}.
Then, $\mu$ satisfies~\eqref{eq:accuracy-max} with $V(x)=\max_{\alpha\in\calS}V_\alpha(x)$.
\end{proposition}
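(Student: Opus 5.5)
Fix $x\in\Re^n$. The goal is $V(x)\le \mu c(x)+\max_{i\in\Mset}V(f_i(x))$ with $V=\max_{\alpha}V_\alpha$. The idea is to apply \eqref{eq:accuracy-ineq-max} at this $x$, with the free indices $(\alpha,i)$ chosen as a maximizer of the right-hand side. Then every S-procedure correction term is nonpositive and can be dropped.

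First, since $\calS$ and $\Mset$ are finite, pick $(\alpha^\star,i^\star)\in\argmax_{(\alpha,i)\in\calS\times\Mset}V_\alpha(f_i(x))$. By definition of $V$,
\[
V_{\alpha^\star}(f_{i^\star}(x))=\max_{i\in\Mset}\max_{\alpha\in\calS}V_\alpha(f_i(x))=\max_{i\in\Mset}V(f_i(x)).
\]
Second, pick $\gamma^\star\in\argmax_{\gamma\in\calS}V_\gamma(x)$, so that $V(x)=V_{\gamma^\star}(x)$.

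Now apply \eqref{eq:accuracy-ineq-max} at this $x$ with the triple $(\gamma^\star,\alpha^\star,i^\star)$. For every $(\beta,j)$, the maximality of $(\alpha^\star,i^\star)$ gives $V_\beta(f_j(x))-V_{\alpha^\star}(f_{i^\star}(x))\le 0$. The multipliers $t_{\gamma^\star,\alpha^\star,i^\star,\beta,j}$ are nonnegative, so the whole sum is $\le 0$. Hence
\[
V(x)=V_{\gamma^\star}(x)\le \mu c(x)+V_{\alpha^\star}(f_{i^\star}(x))=\mu c(x)+\max_{i\in\Mset}V(f_i(x)),
\]
which is \eqref{eq:accuracy-max}. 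Since $x$ was arbitrary, this finishes the argument.

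There is no real obstacle here. The only point needing care is that $(\alpha,i)$ must be chosen depending on $x$, as the argmax of the post-transition values. The argmax exists by finiteness, and \eqref{eq:accuracy-ineq-max} holds for all triples, so this pointwise selection is legitimate. Note that the hypothesis $\mu\ge1$ is not used.
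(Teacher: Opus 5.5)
Your proof is correct and follows essentially the same argument as the paper: fix $x$, choose $\gamma$ maximizing $V_\gamma(x)$ and $(\alpha,i)$ maximizing $V_\alpha(f_i(x))$, then note that the correction terms are nonnegative multipliers times nonpositive differences, so they can be dropped. You spell out that sign argument, which the paper leaves implicit, and you are also right that the hypothesis $\mu\geq1$ is not needed.
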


\begin{proof}
Consider an arbitrary $x\in\Re^n$.
Let $\gamma\in\calS$ such that
\[
V_\gamma(x)=\max_{\zeta\in\calS}V_\zeta(x),\]
and let $(\alpha,i)\in\calS\times\Mset$ such that
\[
V_\alpha(f_i(x))=\max_{j\in\Mset}\max_{\beta\in\calS}V_\beta(f_j(x)).
\]
Then, by~\eqref{eq:accuracy-ineq-max}, it follows that $V_\gamma(x)\leq\mu c(x)+V_\alpha(f_i(x))$, which implies~\eqref{eq:accuracy-max} by the definitions of $\gamma$, $\alpha$ and $i$.
Since $x$ was arbitrary, this concludes the proof.
\end{proof}

\subsubsection{The min case}

Now, assume that $V$ has the form~\eqref{eq:def_V_bar_complete} with quadratic functions, that is, $V(x) = \min_{\alpha \in \calS} V_\alpha(x)$, where $\calS$ is a finite set and, for each $\alpha \in \calS$, $V_\alpha(x) = x^\top P_\alpha x$ with $P_\alpha \in \Re^{n \times n}_{\succeq 0}$. Consider the switched linear system~\eqref{eq:linear_autonomous} and the quadratic cost $c(x) = x^\top Q x$ with $Q \in \Re^{n \times n}_{\succ 0}$.
Solving~\eqref{eq:accuracy-max} in this setting can be treated in a way similar to the max case, but more multipliers are needed. In particular, for each $(\gamma,\alpha,i,\vec\omega,\zeta)\in\calS\times\calS\times\Mset\times\calS^M\times\calS$, let $s_{\gamma,\alpha,i,\vec\omega,\zeta}\geq0$, and for each $(\gamma,\alpha,i,\vec\omega,j)\in\calS\times\calS\times\Mset\times\calS^M\times\Mset$, let $t_{\gamma,\alpha,i,\vec\omega,j}\geq0$.
Consider the condition:
\begin{align}
&V_\gamma(x) + \sum_{\zeta\in\calS}s_{\gamma,\alpha,i,\vec\omega,\zeta}\{V_\zeta(x)-V_\gamma(x)\} \leq \mu c(x) + V_\alpha(f_i(x))+\sum_{j\in\Mset} t_{\gamma,\alpha,i,\vec\omega,j}\{V_{\vec\omega_j}(f_j(x)) - V_\alpha(f_i(x))\}, \nonumber \\
&\hspace{7.5cm}\forall\,x\in\Re^n,\:\forall\,(\gamma,\alpha,i,\vec\omega)\in\calS\times\calS\times\Mset\times\calS^M.
\label{eq:accuracy-ineq-min}
\end{align}
With the hypotheses above on the system (i.e., $f_i(x)=A_ix$ for each $i\in\Mset$), the cost function $c$ and $\{V_\alpha\}_{\alpha\in\calS}$,~\eqref{eq:accuracy-ineq-min} can be formulated as a set of LMIs:
\begin{align}
&P_\gamma + \sum_{\zeta\in\calS} s_{\gamma,\alpha,i,\vec\omega,\zeta}\{P_\zeta-P_\gamma\} \preceq \mu Q + A_i^\top P_\alpha A_i+\sum_{j\in\Mset} t_{\gamma,\alpha,i,\vec\omega,j}\{A_j^\top P_{\vec\omega_j} A_j^{} - A_i^\top P_\alpha A_i^{}\}, \nonumber \\
&\hspace{7.3cm}\forall\,x\in\Re^n,\:\forall\,(\gamma,\alpha,i,\vec\omega)\in\calS\times\calS\times\Mset\times\calS^M.
\end{align}
Hence, it can be solved efficiently using interior-point methods.
The following result states that any feasible solution to~\eqref{eq:accuracy-ineq-min} provides a feasible solution to~\eqref{eq:accuracy-max}:

\begin{proposition}
Given system~\eqref{eq:switched_controlled}, $\{V_\alpha\}_{\alpha\in\calS}\subseteq \calT$ and a cost function $c:\Re^n\to\Re_{\geq0}$, let $\mu\geq1$,
\begin{align*}
&\{s_{\gamma,\alpha,i,\vec\omega,\zeta}\}_{(\gamma,\alpha,i,\vec\omega,\zeta)\in\calS\times\calS\times\Mset\times\calS^M\times\calS}\subseteq\Re_{\geq0} \quad \text{and} \quad \{t_{\gamma,\alpha,i,\vec\omega,j}\}_{(\gamma,\alpha,i,\vec\omega,j)\in\calS\times\calS\times\Mset\times\calS^M\times\Mset}\subseteq\Re_{\geq0}
\end{align*}
satisfy~\eqref{eq:accuracy-ineq-min}.
Then, $\mu$ satisfies~\eqref{eq:accuracy-max} with $V(x)=\min_{\alpha\in\calS}V_\alpha(x)$.
\end{proposition}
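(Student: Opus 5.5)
The argument mirrors the proof of \cref{prop:lower-cocomplete}: fix an arbitrary $x\in\Re^n$ and select indices so that all multiplier terms in~\eqref{eq:accuracy-ineq-min} have the right sign and can be dropped.

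\emph{Left-hand side.} Choose $\gamma\in\calS$ with $V_\gamma(x)=\min_{\zeta\in\calS}V_\zeta(x)=V(x)$. Then $V_\zeta(x)-V_\gamma(x)\geq0$ for every $\zeta\in\calS$. Since each $s_{\gamma,\alpha,i,\vec\omega,\zeta}\geq0$, the left-hand side of~\eqref{eq:accuracy-ineq-min} is at least $V_\gamma(x)=V(x)$, for every choice of $(\alpha,i,\vec\omega)$.

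\emph{Right-hand side.} For each mode $j\in\Mset$, choose $\vec\omega_j\in\argmin_{\beta\in\calS}V_\beta(f_j(x))$, so that $V_{\vec\omega_j}(f_j(x))=V(f_j(x))$. This defines $\vec\omega\in\calS^M$; it is exactly to allow this mode-dependent choice of minimizers that the multipliers are indexed by $\vec\omega$. Next choose $i\in\argmax_{j\in\Mset}V(f_j(x))$ and set $\alpha\coloneqq\vec\omega_i$. Then
\[
V_\alpha(f_i(x))=V(f_i(x))=\max_{j\in\Mset}V(f_j(x)).
\]
For each $j\in\Mset$ we have $V_{\vec\omega_j}(f_j(x))-V_\alpha(f_i(x))=V(f_j(x))-\max_{j'\in\Mset}V(f_{j'}(x))\leq0$. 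Since each $t_{\gamma,\alpha,i,\vec\omega,j}\geq0$, the right-hand side of~\eqref{eq:accuracy-ineq-min} is at most $\mu c(x)+\max_{j\in\Mset}V(f_j(x))$.

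\emph{Conclusion.} Instantiate~\eqref{eq:accuracy-ineq-min} at this particular tuple $(\gamma,\alpha,i,\vec\omega)$ and chain the two bounds:
\[
V(x)\leq\mu c(x)+\max_{j\in\Mset}V(f_j(x)).
\]
This is~\eqref{eq:accuracy-max}. Since $x$ was arbitrary, the inequality holds on all of $\Re^n$.

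The only delicate step is the simultaneous choice of $(\alpha,i,\vec\omega)$ on the right-hand side. The vector $\vec\omega$ must be fixed first, as the collection of per-mode minimizers, and only then can $i$ be taken as a maximizing mode and $\alpha$ as $\vec\omega_i$. With this order of choices, every $t$-term is nonpositive and every $s$-term is nonnegative. Finiteness of $\calS$ and $\Mset$ guarantees that all the required minimizers and maximizers exist.
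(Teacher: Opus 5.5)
Your proof is correct and follows the same argument as the paper: choose $\gamma$ minimizing $V_\zeta(x)$, a max--min pair $(\alpha,i)$, and per-mode nodes $\vec\omega$ so that every $s$-term is nonnegative and every $t$-term nonpositive, then instantiate~\eqref{eq:accuracy-ineq-min} at that tuple; you spell out the sign bookkeeping that the paper leaves implicit. The only cosmetic difference is that the paper picks $(\alpha,i)$ first and then any $\beta_j$ with $V_{\beta_j}(f_j(x))\le V_\alpha(f_i(x))$, so the order of choices you describe as essential is not actually forced.
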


\begin{proof}
Consider an arbitrary $x\in\Re^n$.
Let $\gamma\in\calS$ such that
\[
V_\gamma(x)=\min_{\zeta\in\calS}V_\zeta(x),
\]
and let $(\alpha,i)\in\calS\times\Mset$ such that
\[
V_\alpha(f_i(x))=\max_{j\in\Mset}\min_{\beta\in\calS}V_\beta(f_j(x)).
\]
This implies that for each $j\in\Mset$, there exists $\beta_j\in\calS$ such that $V_{\beta_j}(f_j(x))\leq V_\alpha(f_i(x))$.
Let $\vec\omega=(\beta_1,\ldots,\beta_M)$ where the $\beta_j$'s are as above.
Then, by~\eqref{eq:accuracy-ineq-min}, we have that $V_\gamma(x)\leq\mu c(x)+V_\alpha(f_i(x))$, which implies~\eqref{eq:accuracy-max} by the definitions of $\gamma$, $\alpha$ and $i$.
Since $x$ was arbitrary, this concludes the proof.
\end{proof}

Thanks to the preceding propositions, the scaling factor~$\mu$, which bounds the relative accuracy of the upper bound~$V$, can be computed efficiently through LMIs.

\section{Control of arbitrarily switched linear systems}\label{sec:path-complete-control}

In this section, we use the path-complete framework for the optimal control of switched linear systems under arbitrary switching, with control affine input (i.e., in the form $B_iu$).
We will show that the path-complete framework can be used to provide upper bounds on the optimal value function (i.e., the worst-case cost-to-go under optimal feedback control) for these systems.\\\\
Concretely, we consider controlled switched linear systems, i.e., systems of the form
\begin{equation}
x_{k+1}=A_{\sigma(k)}x_k+B_{\sigma(k)}u_k, \quad k\in\Ne,
\label{eq:switched_Ax_Bu}
\end{equation}
where $u_k\in\Re^m$ is the \emph{control input} at time $k$ and for all $i\in\Mset$, $A_i\in\Re^\nxn$ and $B_i\in\Re^\nxm$.\\\\
We consider a quadratic cost function $\ct:\Re^n\times\Re^m\to\Re_{\geq0}$ defined as
\begin{equation}
\ct(x,u)=x^\top Qx+u^\top Ru, \quad Q\in\Re^\nxn_{\succ0},\:R\in\Re^\mxm_{\succ0}.
\label{eq:cost_x_u}
\end{equation}
Given a feedback policy $\phi:\Re^n\to\Re^m$, the associated closed-loop system is the switched system~\eqref{eq:switched_Ax_Bu} with $u_k=\phi(x_k)$ for all $k\in\Ne$.
Equivalently, it is the autonomous switched system~\eqref{eq:switched_controlled} where for each $i\in\Mset$, $f_i$ is given by $f_i(x)=A_ix+B_i\phi(x)$.
The associated closed-loop cost is defined by $c^\phi(x)=\ct(x,\phi(x))$.
The \emph{value function} of the closed-loop system is defined as follows.
\begin{definition}[Closed-loop value function]
Given a feedback policy $\phi:\Re^n\to\Re^m$, the \emph{closed-loop value function} of system~\eqref{eq:switched_Ax_Bu} with cost $\ct:\Re^n\times\Re^m\to\Re_{\geq0}$ and policy $\phi$ is denoted by $J^\phi(x)$ and defined as the value function (Definition~\ref{definition:cost_to_go_val_function}) of system~\eqref{eq:switched_controlled} with $f_i$ defined by $f_i(x)\coloneqq A_ix+B_i\phi(x)$ for all $x\in\Re^n$ and $i\in\Mset$, with cost $c(x)\coloneqq\ct(x,\phi(x))$ for all $x\in\Re^n$.
The \emph{optimal value function} of system~\eqref{eq:switched_Ax_Bu} with cost $\ct$ is the smallest value function that can be obtained (pointwise) among all closed-loop systems:
\[
J^*(x)\coloneqq\inf_{\phi:\Re^n\to\Re^m} J^\phi(x).
\]\vskip0pt \label{def:closed_loop_val_fct}
\end{definition}

In this work, we do not attempt to compute the optimal feedback policy. 
Instead, we consider a \emph{piecewise linear} policy because it allows us to compute an upper bound on the value function of the associated closed-loop system by solving LMIs, 
similar to what we did for the autonomous case. 
By construction, this bound also provides an upper bound on the optimal value function $J^*$.
In particular, we adopt the policy proposed in~\cite{dellarossa2024graph}, 
where $|\calS|$ gain matrices 
$\{K_\alpha\}_{\alpha\in\calS}\subseteq\Re^\mxn$ are introduced, one for each node of a complete graph 
$\calG=(\calS,\calE)$.
The feedback policy is defined as
\begin{equation}
\phi(x) = K_{\kappa(x)} x, \quad \kappa(x) \in \argmin_{\alpha\in\calS} V_\alpha(x),
\label{eq:policy}
\end{equation}
where $\{V_\alpha\}_{\alpha\in\calS}\subseteq\calF^n_{\geq0}$ are functions associated with the nodes of the graph.
The resulting closed-loop dynamics is given by
\begin{equation}
x_{k+1} = \left(A_{\sigma(k)} + B_{\sigma(k)} K_{\kappa(x_k)}\right) x_k,
\label{eq:LQR_switched_system}
\end{equation}
and the resulting cost $c:\Re^n\to\Re_{\geq0}$ by
\begin{equation}
c(x)=x^\top Qx+x^\top K_{\kappa(x)}^\top R K_{\kappa(x)}^{}x, \quad Q\in\Re^\nxn_{\succ0},\:R\in\Re^\mxm_{\succ0}.
\label{eq:cost_controlled}
\end{equation}

The following theorem shows that if $\calG$ is complete, and $\{V_\alpha\}_{\alpha\in\calS}$ satisfy graph-driven inequalities similar to~\eqref{eq:ineq_V}, then an upper bound analogous to that in the autonomous case can be obtained by taking the pointwise minimum of the $V_\alpha$'s:

\begin{theorem}\label{theorem:upper-bound-control}
Consider system~\eqref{eq:switched_Ax_Bu}, a cost function $\ct:\Re^n\times\Re^m\allowbreak\to\Re_{\geq0}$, a directed labeled graph $\calG=(\calS,\calE)$ and a template $\calT\subseteq\calF^n_{\geq0}$.
Assume that $\calG$ is complete for $\Mset$ and that $\{V_\alpha\}_{\alpha\in\calS}\subseteq\allowbreak\calT$, $\{K_\alpha\}_{\alpha\in\calS}\subseteq \Re^{m\times n}$ satisfy
\begin{align}
&V_\alpha(x) \ge \ct(x,K_\alpha x) + V_\beta(A_i x+ B_i K_\alpha x), \quad \forall\,x\in\Re^n,\:\forall\, (\alpha,\beta,i)\in\calE.
\label{eq:PC_upper_bound_LQR}
\end{align}
Let $V:\Re^n\to\Re_{\geq0}$ be defined by
\begin{equation}\label{eq:min-control-value}
V(x) \coloneqq \min_{\alpha\in\calS} V_\alpha(x),
\end{equation}
and let $\phi:\Re^n\to\Re^m$ be defined as in~\eqref{eq:policy}.
Then, for all $x\in \Re^n$, $V(x)\geq J^\phi(x)\geq J^*(x)$, where $J^*$ is the optimal value function of system~\eqref{eq:switched_Ax_Bu} with cost $\ct$, and $J^\phi$ is the value function of the associated closed-loop system with policy $\phi$.
\end{theorem}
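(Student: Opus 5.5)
The plan is to reduce the statement to \cref{prop:bellman-upper} applied to the closed-loop autonomous system, exactly as in Case~1 of the proof of \cref{theorem:ub_complete_cocomplete}. Fix the policy $\phi$ from~\eqref{eq:policy}, with $\kappa(x)\in\argmin_{\alpha\in\calS}V_\alpha(x)$ (a fixed selection), so that the closed-loop maps are $f_i(x)=A_ix+B_iK_{\kappa(x)}x$ and the closed-loop cost is $c(x)=\ct(x,K_{\kappa(x)}x)$. Note that $V(x)=V_{\kappa(x)}(x)$ by definition of $\kappa$. I will show
\[
V(x)\geq c(x)+\max_{i\in\Mset}V(f_i(x)),\quad\forall\,x\in\Re^n,
\]
after which \cref{prop:bellman-upper} gives $V\geq J^\phi$ pointwise, and $J^\phi\geq J^*$ follows immediately from the definition of $J^*$ as an infimum over policies (Definition~\ref{def:closed_loop_val_fct}).

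To prove the inequality, I fix $x$ and $i\in\Mset$ and set $\alpha=\kappa(x)$. By completeness of $\calG$ (\cref{definition:complete_cocomplete}), there is some $\beta\in\calS$ with $(\alpha,\beta,i)\in\calE$. Inequality~\eqref{eq:PC_upper_bound_LQR} for this edge, evaluated at $x$, gives
\[
V(x)=V_\alpha(x)\geq \ct(x,K_\alpha x)+V_\beta(A_ix+B_iK_\alpha x)\geq c(x)+V(f_i(x)),
\]
using $K_\alpha=K_{\kappa(x)}$ (so the first term is $c(x)$ and the argument is $f_i(x)$) and $V_\beta\geq\min_{\zeta}V_\zeta=V$. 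Since $i$ was arbitrary, taking the maximum over $i$ yields the claim.

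The only delicate point, and the step I expect to require care, is the order of quantifiers. The gain must be chosen \emph{before} the mode is revealed, and it depends on $x$ only through $\kappa(x)$. The argument works because the edge inequality~\eqref{eq:PC_upper_bound_LQR} ties the gain $K_\alpha$ to the \emph{source} node $\alpha$, independently of $i$ and $\beta$. It also needs completeness to guarantee an outgoing edge from the minimizing node $\kappa(x)$ for every label $i$. Ties in the argmin are harmless because any selection works. Nonnegativity of $V$ holds since $\calT\subseteq\calF^n_{\geq0}$, so the hypotheses of \cref{prop:bellman-upper} are met.
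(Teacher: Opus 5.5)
Your proposal is correct and follows essentially the same route as the paper's proof. The paper also derives $V_\alpha(x)\geq \ct(x,K_\alpha x)+V(A_ix+B_iK_\alpha x)$ for every source node $\alpha$ and label $i$ using completeness, specializes to $\alpha=\kappa(x)$, takes the maximum over $i$, and applies \cref{prop:bellman-upper}, with $J^\phi\geq J^*$ coming directly from the definition of $J^*$; the only cosmetic difference is that you fix $x$ and $\alpha=\kappa(x)$ before invoking completeness.
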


\begin{proof}
From~\eqref{eq:PC_upper_bound_LQR} and the definition of $V$, we have
\begin{equation}
V_\alpha(x) \ge \ct(x,K_\alpha x) + V(A_i x + B_i K_\alpha x), 
\quad \forall\,x\in\Re^n,\ \forall\,(\alpha,i)\in E,
\end{equation}
where the set $E$ is defined as in the \emph{complete} part of the proof of~\cref{theorem:ub_complete_cocomplete}:
\[
E = \{(\alpha,i)\in\calS\times\Mset:\ \exists\,\beta \in\calS \text{ such that } (\alpha, \beta,i)\in \calE\}.
\]
The graph being complete, we rewrite the previous set of inequalities as
\begin{align*}
&V_\alpha(x) \geq \ct(x,K_\alpha x) + V(A_i x+ B_i K_\alpha x), \quad \forall\,x\in\Re^n,\:\forall\,\alpha\in\calS,\:\forall\,i\in\Mset.
\end{align*}
Taking $\alpha=\kappa(x)$ with $\kappa$ as in~\eqref{eq:policy} and letting $c(x)\coloneqq\ct(x,K_{\kappa(x)}x)$, we obtain
\begin{equation}
V_{\kappa(x)}(x) \ge c(x) + V(A_i x+ B_i K_{\kappa(x)}x), \quad \forall\,x\in\Re^n, \:\forall\,i\in\Mset. 
\end{equation} 
By definition of $V$ and $\kappa(x)$, we have
\begin{equation}
V (x) \ge c(x) + V(A_i x+ B_i K_{\kappa(x)}x), \quad \forall\,x\in\Re^n,\:\forall\,i\in\Mset. 
\end{equation} 
Taking the maximum over $i$, we get
\begin{equation}
V(x)\geq c(x) + \max_{i\in\Mset} V(A_i x+ B_i K_{\kappa(x)} x), \quad \forall\,x\in\Re^n.
\end{equation}
Defining $f_i(x) \coloneqq A_i x + B_i K_{\kappa(x)}x$, we use~\cref{prop:bellman-upper} to conclude that $V(x)\ge J^\phi(x)$ for all $x\in \Re^n$. The inequality $J^\phi(x)\ge J^*(x)$ for each $x\in \Re^n$ follows directly from~\cref{def:closed_loop_val_fct}, completing the proof.
\end{proof}
Now, we show that the inequalities~\eqref{eq:PC_upper_bound_LQR} can be expressed as LMIs.
As for autonomous linear switched systems (see~\cref{section:LMIs_formulation_linear_auton_syst}), we assume a quadratic template, i.e., $V_\alpha\in\calT_\txtquad$ for each $\alpha\in\calS$.
Again, we parametrize them as $V_\alpha(x) = x^\top P_\alpha x$, with $P_\alpha \in \Re^{n \times n}_{\succeq 0}$ for all $\alpha \in \mathcal{S}$. Using this parametrization together with the expression of the cost in~\eqref{eq:cost_x_u}, 
we can rewrite~\eqref{eq:PC_upper_bound_LQR} as follows:
\begin{align}
&P_\alpha - Q - K_\alpha^\top R K_\alpha - A_i^\top P_\beta A_i - K_\alpha^\top B_i^\top P_\beta B_i K_\alpha  - 2 K_\alpha^\top B_i^\top P_\beta A_i \succeq 0, \quad \forall\, (\alpha,\beta,i)\in\calE.
\label{eq:matrix_ineq_nonlinear}
\end{align}

The following proposition states that these matrix inequalities can be expressed as LMIs through appropriate changes of variables, and draws inspiration from~\cite[Theorem 1]{kothare1996robust} and~\cite[Lemma 11]{dellarossa2024graph}.

\begin{proposition}\label{prop:schur-for-control}
Given $M\in \Ne_{>0}$ and a finite set $\calS$, sets of matrices $\{A_i\}_{i\in \Mset}\subseteq \Re^{\nxn}$, $\{B_i\}_{i\in \Mset}\subseteq \Re^{\nxm}$, $\{K_\alpha\}_{\alpha\in\calS}\subseteq \Re^{m\times n}$ and $\{P_\alpha\}_{\alpha\in\calS}\subseteq \Re^\nxn_{\succ0}$, and matrices $Q\in\Re^\nxn_{\succ0}$ and $R\in\Re^\mxm_{\succ0}$, consider the transformations
\[
S_\alpha^{} = P_\alpha^{-1}, 
\quad 
Y_\alpha = K_\alpha P_\alpha^{-1}, \quad \forall\,\alpha\in\calS.
\]
The inequalities~\eqref{eq:matrix_ineq_nonlinear} are equivalent to
\begin{align}
&\begin{bmatrix}
S_\alpha & S_\alpha A_i^\top + Y_\alpha^\top B_i^\top & S_\alpha & Y_\alpha^\top \\
A_i S_\alpha + B_i Y_\alpha & S_\beta & 0 & 0 \\
S_\alpha & 0 & Q^{-1} &0 \\
Y_\alpha & 0 & 0 & R^{-1}
\end{bmatrix} \succeq0, \quad \forall\,(\alpha,\beta,i)\in\calE. \label{eq:lmi_controlled}
\end{align}\vskip0pt
\end{proposition}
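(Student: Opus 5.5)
The plan is to reduce \eqref{eq:matrix_ineq_nonlinear} to a compact closed-loop form, apply a congruence transformation by $S_\alpha=P_\alpha^{-1}$, and finish with a Schur complement. The main point of care is the cross term. As a quadratic form in $x$, the expression in \eqref{eq:matrix_ineq_nonlinear} comes from $V_\alpha(x)-\ct(x,K_\alpha x)-V_\beta((A_i+B_iK_\alpha)x)\geq 0$. The term $2K_\alpha^\top B_i^\top P_\beta A_i$ should therefore be read through its symmetric part, $K_\alpha^\top B_i^\top P_\beta A_i + A_i^\top P_\beta B_i K_\alpha$. This is the only interpretation under which ``$\succeq 0$'' is meaningful, and it is the one consistent with \eqref{eq:PC_upper_bound_LQR}. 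With this reading, for each edge $(\alpha,\beta,i)\in\calE$, \eqref{eq:matrix_ineq_nonlinear} is equivalent to
\[
P_\alpha - Q - K_\alpha^\top R K_\alpha - (A_i+B_iK_\alpha)^\top P_\beta (A_i+B_iK_\alpha)\succeq 0 .
\]

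Next, I multiply this inequality on both sides by $S_\alpha=P_\alpha^{-1}$, which is symmetric and invertible. A congruence by an invertible matrix preserves semidefiniteness in both directions. Using $Y_\alpha=K_\alpha S_\alpha$, the inequality becomes
\[
S_\alpha - S_\alpha Q S_\alpha - Y_\alpha^\top R Y_\alpha - (A_iS_\alpha+B_iY_\alpha)^\top S_\beta^{-1}(A_iS_\alpha+B_iY_\alpha)\succeq 0,
\]
where I used $P_\beta=S_\beta^{-1}$. I then rewrite the two middle terms as $S_\alpha Q S_\alpha = S_\alpha (Q^{-1})^{-1} S_\alpha$ and $Y_\alpha^\top R Y_\alpha = Y_\alpha^\top (R^{-1})^{-1}Y_\alpha$. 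After these rewrites, the left-hand side has the form $S_\alpha - N^\top D^{-1} N$, where
\[
N=\begin{bmatrix} A_iS_\alpha+B_iY_\alpha\\ S_\alpha\\ Y_\alpha\end{bmatrix},\qquad D=\mathrm{diag}(S_\beta,Q^{-1},R^{-1}).
\]

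Since $P_\beta\succ0$, $Q\succ0$ and $R\succ0$, the matrix $D$ is positive definite. By the Schur complement lemma, $S_\alpha - N^\top D^{-1}N\succeq 0$ holds if and only if $\begin{bmatrix} S_\alpha & N^\top\\ N & D\end{bmatrix}\succeq 0$. Writing out this block matrix gives exactly \eqref{eq:lmi_controlled}, with the $(1,2)$ block equal to $S_\alpha A_i^\top+Y_\alpha^\top B_i^\top$. Every step above is an equivalence, so both directions hold edge by edge.

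The only step I expect to need care is the justification of the Schur complement. The strict positivity of $D$ is essential here, and it is the reason the statement assumes $P_\alpha\succ0$ rather than $P_\alpha\succeq0$. The other point needing care is the symmetrization of the cross term discussed above. Everything else is routine algebra.
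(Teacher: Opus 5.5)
Your proof is correct and follows the same route as the paper: a Schur complement with respect to the positive definite block $\mathrm{diag}(S_\beta, Q^{-1}, R^{-1})$, followed by the substitution $P_\alpha = S_\alpha^{-1}$, $K_\alpha = Y_\alpha P_\alpha$. You also make explicit two points the paper leaves implicit, and both are handled correctly: the congruence by $S_\alpha$ that links the Schur complement to \eqref{eq:matrix_ineq_nonlinear}, and the reading of $2K_\alpha^\top B_i^\top P_\beta A_i$ as its symmetric part $K_\alpha^\top B_i^\top P_\beta A_i + A_i^\top P_\beta B_i K_\alpha$.
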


\begin{proof}
The matrix in~\eqref{eq:lmi_controlled} can be written in block form as
\[
G =
\begin{bmatrix}
A & B \\
B^\top & C
\end{bmatrix},
\]
where
\[
A = S_\alpha, \;\;
B = \left[S_\alpha A_i^\top + Y_\alpha^\top B_i^\top \;\; S_\alpha \;\; Y_\alpha^\top\right], \;\;
C = \begin{bmatrix}
S_\beta & 0 & 0 \\
0 & Q^{-1} & 0 \\
0 & 0 & R^{-1}
\end{bmatrix}.
\]
Since $C\succ 0$, by the Schur complement, the condition $G \succeq 0$ 
is equivalent to
\[
A-BC^{-1}B^\top\succeq 0.
\]
Substituting $P_\alpha^{} = S_\alpha^{-1}$ and $K_\alpha = Y_\alpha P_\alpha$, one recovers inequality~\eqref{eq:matrix_ineq_nonlinear}. 
This establishes the equivalence between \eqref{eq:matrix_ineq_nonlinear} and \eqref{eq:lmi_controlled}.
\end{proof}

\section{Numerical experiments on switched linear systems}\label{sec:experiments}

In this section, we illustrate the proposed approach through numerical experiments, in both autonomous and controlled cases.

\subsection{Autonomous case}

In this subsection, we first consider an autonomous switched linear system of the form~\eqref{eq:linear_autonomous} with two states and two switching modes, whose matrices are given in~\eqref{eq:matrices_simple_ex}, and we set $Q = I$ in the cost function $c(x)=x^TQx$. The procedure is as follows.\\\\
For graph orders $l = 1, 2, 3$:
\begin{enumerate}
    \item We consider the co-complete dual De Bruijn graph $\mathcal{H}^\top_l(2) = (\calS_l, \mathcal{E}_l)$ (see~\cref{def:debruijn}).
    \item Using the inequalities in~\eqref{eq:ineq_V_lmi} and the objective function~\eqref{eq:obj_log_det} with $\ell=\mathrm{trace}$, we compute $\{P_\alpha\}_{\alpha\in\calS_l}$ by solving the semidefinite program
    \begin{equation}
    \min_{\{P_\alpha\}_{\alpha\in\calS_l}} \sum_{\alpha\in\calS_l} \mathrm{trace}(P_\alpha) \quad \text{s.t.} \quad \text{\eqref{eq:ineq_V_lmi} holds with $\calE=\calE_l$}.
    \end{equation}
    \item By~\cref{theorem:ub_complete_cocomplete}, we obtain the upper bound 
    \begin{equation} 
        V^l(x) \coloneqq \max_{\alpha\in\calS_l} x^\top P_\alpha x.
    \end{equation}
    \item We then compute the lower bound $W^l \coloneqq \frac{1}{\mu} V^l$ using~\cref{prop:lower-cocomplete}, by solving
    \begin{equation}
    \min_{\mu\geq1,\,\{t_{\gamma,\alpha,i,\beta,j}\}\subseteq \Re_{\ge0}} \mu \quad
    \text{s.t.} \quad \text{\eqref{eq:lb-cocomplete} holds with $\calS=\calS_l$.}
    \end{equation}
\end{enumerate}
The resulting upper and lower bounds are displayed in~\cref{fig:bounds_debruijn_auton} for $x=(\cos(\theta),\sin(\theta))$, with $\theta\in[0,\pi]$. 
Note that we focus on dual De Bruijn graphs in this autonomous case, since the convergence result was established specifically for this graph family (see~\cref{thm:convergence}). 
As we can see in the figure, when the graph order increases, the gap between the upper and lower bound curves decreases, demonstrating the efficiency of the proposed upper bounding method and its ability to tightly approximate the true value function.\\\\
\begin{figure}
\centering
\includegraphics[width=0.8\linewidth]{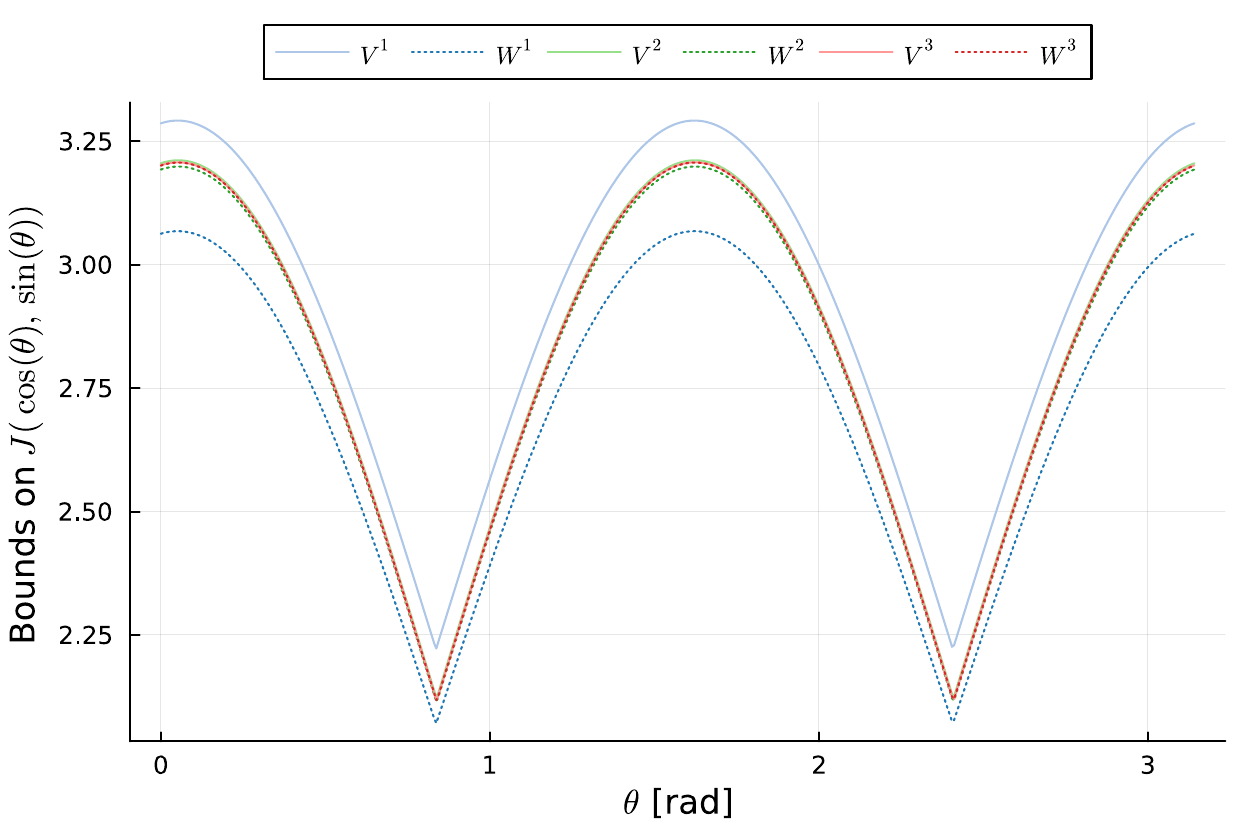} \caption{Upper and lower bounds on the value function, $V^{l}(x)$ and $W^{l}(x)\coloneqq \frac{1}{\mu} V^l(x)$, 
for $x=(\cos(\theta),\,\sin(\theta))$ with $\theta\!\in\![0,\pi]$, 
for the autonomous switched linear system with matrices given in~\eqref{eq:matrices_simple_ex}, and using dual De Bruijn graphs of order $l\!\in\!\{1,2,3\}$. The gap between the upper and lower bound curves decreases as the order increases.}
\label{fig:bounds_debruijn_auton}
\end{figure}To further quantify the tightness of our upper bounds, we analyze the scaling coefficient $\mu$ for various combinations of the number of states, switching modes, and graph orders. Recall that
\begin{equation}
    \frac{1}{\mu} V(x) \leq J(x) \leq V(x),\quad \forall\, x \in \Re^n, \label{eq:tightness_ineq}
\end{equation}
so that values of $\mu$ close to $1$ indicate that the computed upper bound is tight. 
To do that, we use the same procedure as described above, but now the system matrices $A_i$ are generated randomly. For each configuration $(n,M)$, we perform $500$ realizations and report the average results in~\cref{tab:benchmark_autonomous}. Each realization is evaluated for graph order $l=1,2,3,4$. The results show that $\mu$ consistently converges to $1$ as $l$ increases, which illustrates the non-conservativeness of our upper bounds.

\begin{table}[ht]
\centering
\begin{tabular}{@{}c|ccc@{}}
\toprule
\textbf{Graph order} & \textbf{($n=2$, $M=2$)} & \textbf{($n=5$, $M=3$)} & \textbf{($n=8$, $M=2$)}  \\
\midrule
$1$ & $1.038$ & $1.131$& $1.301$\\
$2$ & $1.008$ & $1.032$ &  $1.098$\\
$3$ & $1.002$ & $1.009$ &  $1.031$\\
$4$ & $1.0007$ & $1.002$ &  $1.009$\\
\bottomrule
\end{tabular}\vskip2pt
\caption{Values of $\mu$ in~\eqref{eq:tightness_ineq} for an autonomous switched linear system with various numbers of states ($n$) and switching modes ($M$), computed for several orders of the dual De Bruijn graph. The system matrices are randomly generated, and the reported values are averaged over $500$ realizations. We observe that $\mu$ tends to $1$ as the order increases.}
\label{tab:benchmark_autonomous}
\vskip-12pt
\end{table}

\subsection{Controlled case}

In this subsection, we illustrate our approach on a controlled switched linear system of the form~\eqref{eq:switched_Ax_Bu}. Specifically, we consider the system from~\cite[Example~3]{dellarossa2024graph}, which involves the following matrices:
\begin{equation} A_1 = \begin{bmatrix} 0 & 1 \\ -1 & 0 \end{bmatrix},\quad A_2 = \begin{bmatrix} -1 & 0 \\ 0 & -0.95 \end{bmatrix}, \quad B_1 = B_2 = \begin{bmatrix} 1 \\ 0 \end{bmatrix}. \label{eq:ex_2D_controlled} \end{equation}
Our goal is to find a policy of the form~\eqref{eq:policy} and bound the value function of the resulting closed-loop system, where the cost function is given by $Q = I$ and $R = I$ in~\eqref{eq:cost_controlled}.
For that, we apply the method described in \cref{sec:path-complete-control}. For a graph order $l \in \{ 1, \ldots, 5\}$, we consider the complete De Bruijn graph $\mathcal{H}_l(2) = (\calS_l, \mathcal{E}_l)$ (see~\cref{def:debruijn}), and find matrices $\{P_{\alpha}\}_{\alpha \in \calS_l}$ and $\{K_{\alpha}\}_{\alpha \in \calS_l}$ such that the LMIs \eqref{eq:matrix_ineq_nonlinear} hold. We consider two different objective functions: $\ell=\log \circ \det$ in~\eqref{eq:obj_log_det} (hereafter referred to as the ``log-det'' objective), and the pointwise objective defined in \eqref{eq:pointwise-complete}.\footnote{The first program can be convexified with the Schur complement trick described in \cref{prop:schur-for-control}, and the convexification for the pointwise objective is thoroughly explained in Appendix~\ref{app:dcdc}. The question of whether the objective $\ell=\mathrm{trace}$ in~\eqref{eq:obj_log_det} can be convexified in the controlled case remains open, hence the use of $\ell = \log \circ \det $ in this case.} Solving these programs yields controllers of the form~\eqref{eq:policy} with $\calS=\calS_l$ as well as an upper bound on the value function of the form~\eqref{eq:min-control-value} with $\calS=\calS_l$.\\\\
In \cref{tab:control_experiments}, we report values of $V(x_0)$ obtained using the path-complete method with $l = 1, \dots, 5$, for two initial states, and considering both the ``log-det'' and the pointwise objectives.\footnote{Contrarily to the autonomous case, it is still an open question whether it is possible to derive (useful) approximation guarantees for the controlled case. We therefore directly report the upper bound values $V(x_0)$ in this case.} As one can see, for both initial states, and for both objectives, the upper bound on the value function is decreasing with the order. However, we must precise that, although monotonicity can be observed for both objectives, only monotonicity for the pointwise objective can be proven (see Appendix~\ref{app:monotonic_pointwise} for a proof). We also observe that, as expected, the pointwise bound performs better as we compare the value function at the initial state. Finally, we note that increasing the order of the De Bruijn graph under the ``log-det'' objective is more beneficial (i.e., leads to a greater reduction in the upper bound) for the second initial state $x_0 = (\cos(2), \sin(2))$ than for the first one, $x_0 = (\cos(0.5), \sin(0.5))$.\\\\
\begin{table}
\centering
\begin{tabular}{ccc|cc}
\toprule
 & \multicolumn{2}{c|}{$x_0=(\cos(0.5),\sin(0.5))$} & \multicolumn{2}{c}{$x_0=(\cos(2),\sin(2))$} \\
\cmidrule(r){2-3} \cmidrule(l){4-5}
\textbf{Graph order} & $\textbf{log-det}$ & \textbf{pointwise} & \textbf{log-det} & \textbf{pointwise} \\
\midrule
1 & $10.453$ & $10.200$ & $10.392$ & $9.428$ \\
2 & $10.426$ & $9.475$ & $10.227$ &  $9.004$\\
3 & $10.417$ & $9.218$ & $10.142$ &  $8.841$ \\
4 & $10.412$ & $9.167$ & $10.074$ & $8.814$\\
5 & $10.410$ & $9.163$ & $10.023$ & $8.813$ \\
\bottomrule
\end{tabular}\vskip2pt
\caption{Upper bounds $V^l(x_0)$ on the value function, for two different initial states $x_0$, using De Bruijn graphs of orders $l = 1, \dots, 5$. The system matrices are defined in~\eqref{eq:ex_2D_controlled}. Two objective functions are considered: $\ell=\log \circ \det$ in~\eqref{eq:obj_log_det}, and the pointwise objective function (see~\eqref{eq:pointwise-complete}, complete case). We observe that the upper bounds decrease with the order.}
\label{tab:control_experiments}
\vskip-12pt
\end{table}\\\\
We also display the upper bound obtained under the ``log-det'' objective for $x=(\cos(\theta),\sin(\theta))$, with $\theta\in[0,\pi]$, in \cref{fig:bounds_debruijn_control}. Again, a monotonic decrease {with the order can be observed, although this is not guaranteed for the ``log-det'' objective. Finally, we note that the reduction in the upper bound with increasing order is more pronounced for some points than for others, which is consistent with our previous observations from~\cref{tab:control_experiments}.
\begin{figure}
\centering
\includegraphics[width=0.8\linewidth]{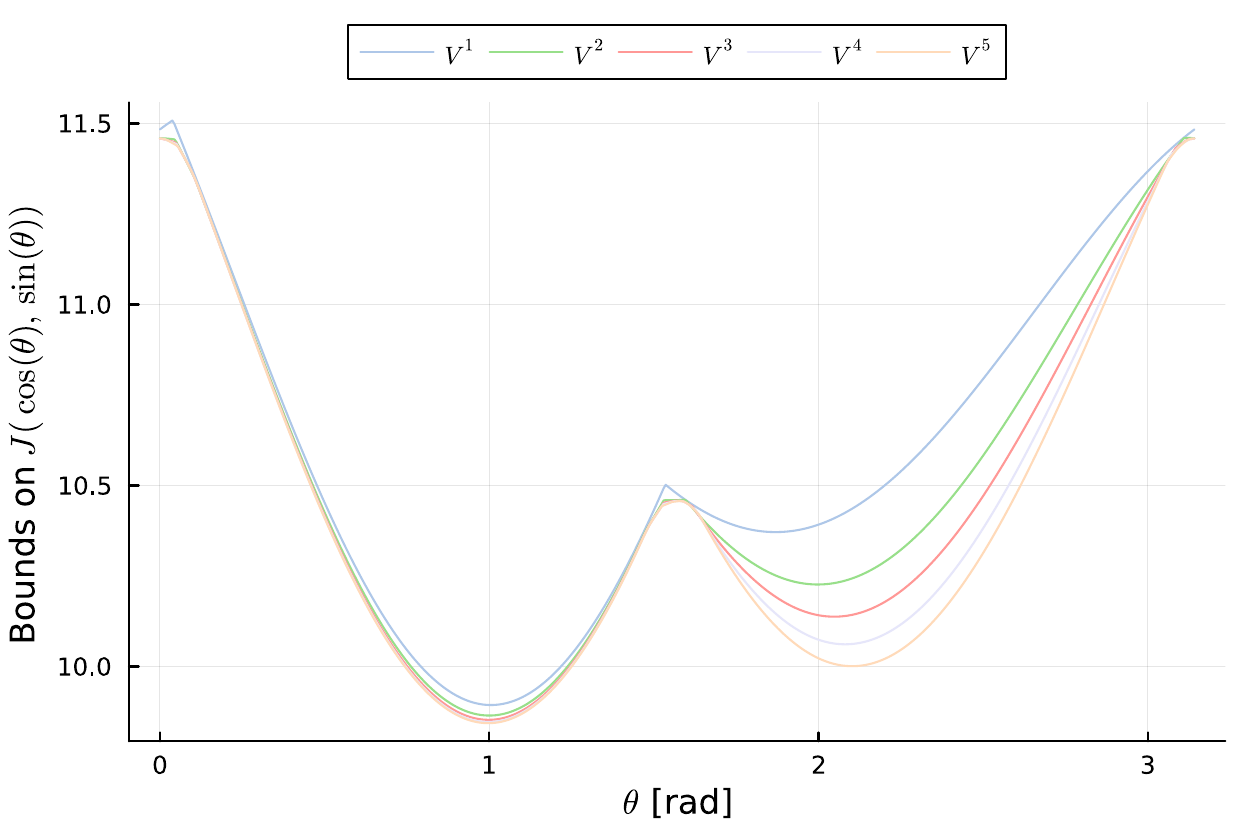} 
\caption{Upper bound $V^{l}(x)$ on the value function, for 
$x = (\cos(\theta), \sin(\theta))$ with $\theta \in [0,\pi]$, 
computed for the controlled switched linear system defined by the matrices in~\eqref{eq:ex_2D_controlled},
using De Bruijn graphs of order $l \in \{1,2,\ldots,5\}$. The upper bound decreases as the order increases, with a more pronounced effect for some points.}
\label{fig:bounds_debruijn_control}
\end{figure}

\section{Conclusion}

We proposed a path-complete framework to compute upper bounds on the value function of switched systems under arbitrary switching. Our approach consists in encoding dynamic programming inequalities on a path-complete graph, associating each node $\alpha$ with a function $V_\alpha$, and combining these functions appropriately (via max or min) to obtain a guaranteed upper bound on the true value function. We first focused on the general nonlinear autonomous case, then specialized to switched linear systems with quadratic costs, for which we derived tractable LMI formulations and provided complexity guarantees. We further quantified the tightness of the bounds and showed that using dual De Bruijn graphs of increasing order yields non-conservative approximations. Finally, we extended the framework to controlled switched linear systems with affine control inputs and demonstrated its efficiency through numerical examples.\\\\
In future work, we plan to compute an approximation guarantee for the controlled setting. This would be very useful since, unlike the autonomous case, it is not trivial to obtain lower bounds by simulating over long horizons (at an exponentially growing computational cost). We also aim to investigate alternative control laws, not necessarily piecewise linear, which could lead to improved policies. Finally, we plan to examine how the upper and lower bounds derived in this paper can be used inside a policy improvement loop.

\bibliographystyle{abbrv}
\bibliography{refs}
\appendix
\section{Re-formulation for SDP solvers} \label{app:dcdc}

In this appendix, we explain how we re-formulate the program 
\begin{equation} \label{eq:program-to-be-convexified}
\min_{\{(P_\alpha, K_{\alpha})\}_{\alpha\in\calS_l}} \min_{\gamma\in\calS_l} x_0^\top P_{\gamma} x_0^{} \quad
    \text{s.t.} \quad 
    \text{\eqref{eq:matrix_ineq_nonlinear} holds with $\calE=\calE_l$}
\end{equation}
to be solved by SDP solvers\color{black}. First, by~\cref{prop:schur-for-control}, we perform the change of variables $S_\alpha^{} \coloneqq P_\alpha^{-1}$ and $Y_\alpha^{} \coloneqq K_\alpha P_\alpha^{-1}$, and the problem becomes
\begin{equation} 
\min_{\{(S_\alpha, Y_{\alpha})\}_{\alpha\in\calS_l}} \min_{\gamma\in\calS_l} x_0^\top S_{\gamma}^{-1} x_0^{} \quad \text{s.t.} \quad \text{\eqref{eq:lmi_controlled} holds with $\calE=\calE_l$}.
\end{equation}
The program above is equivalent to 
\begin{equation}
\min_{\gamma\in\calS_l} \Big[ \underbrace{\min_{\{(S_\alpha, Y_{\alpha})\}_{\alpha\in\calS_l}} x_0^\top S_{\gamma}^{-1} x_0^{} \quad \text{s.t.} \quad \text{\eqref{eq:lmi_controlled} holds with $\calE=\calE_l$}}_{\text{prog}(\gamma)} \Big].
\end{equation}
Therefore one can solve the original problem \eqref{eq:program-to-be-convexified} by solving $\text{prog}(\gamma)$ for all $\gamma$, and select the case where $\gamma$ leads to the best objective value. It remains to re-write the objective value for a given $\gamma$. For that, we introduce a scalar variable $t\in\Re$ as follows: 
\begin{equation} 
\begin{aligned}
    \min_{t,\,\{(S_\alpha, Y_{\alpha})\}_{\alpha\in\calS_l}}\!\! &\quad t \\
    \text{ s.t.} &\quad \text{\eqref{eq:lmi_controlled} holds with $\calE=\calE_l$}, \\
    &\quad t \geq x_0^\top S_{\gamma}^{-1} x_0^{}.
\end{aligned}
\end{equation}
Finally, we can re-write the last constraint using the Schur complement as 
\begin{equation}
    t \geq x_0^\top S_{\gamma}^{-1} x_0^{} \;\Leftrightarrow\; \begin{bmatrix}
        S_\gamma & x_0^{}\\
        x_0^\top & t
    \end{bmatrix} \succeq 0, 
\end{equation}
and solving $\text{prog}(\gamma)$ is equivalent to solving 
\begin{equation} 
\begin{aligned}
    \min_{t,\,\{(S_\alpha, Y_{\alpha})\}_{\alpha\in\calS_l}}\!\! &\quad t \\
    \text{ s.t.} &\quad \text{\eqref{eq:lmi_controlled} holds with $\calE=\calE_l$}, \\
    &\quad \begin{bmatrix}
        S_\gamma & x_0^{}\\
        x_0^\top & t
    \end{bmatrix} \succeq 0. 
\end{aligned}
\end{equation}
The latter is an SDP program that can be handled by most SDP solvers.

\section{Monotonic decrease of the upper bound with respect to the graph order}\label{app:monotonic_pointwise}

In this appendix, we show that the objective function of the following optimization problem:
\begin{equation}
\begin{aligned}
    &\min_{\{(P_\alpha, K_{\alpha})\}_{\alpha\in\calS_l}} \min_{\alpha \in \calS_l} x_0^\top P_\alpha x_0 \\
    \text{s.t. } &P_\alpha \succeq (Q + K_\alpha^\top R K_\alpha) + (A_i + B_iK_\alpha)^\top P_\beta (A_i + B_iK_\alpha), \quad \forall\,(\alpha, \beta, i)\in\calE_l,
\end{aligned}
\label{eq:pb_P_l}
\end{equation}
decreases monotonically as the order $l$ of the De~Bruijn graph $\calH_l(M) = (\calS_l,\calE_l)$ increases. 
We denote the problem~\eqref{eq:pb_P_l} by $\mathcal P_l$, and its optimal objective value by $g_l$. 
We aim to prove that $g_l \ge g_{l+1}$ for all $l \ge 0$. 

\vspace{0.3em}
\noindent\textbf{Step 1. Construction of a candidate solution for \(\mathcal P_{l+1}\).}  
Let $\{(P_\alpha^l, K_\alpha^l)\}_{\alpha\in\calS_l}$ be an optimal solution to $\mathcal P_l$. 
Define the mapping
\[
\pi : \mathcal S_{l+1} \to \mathcal S_l,
\]
which associates to each sequence of length $l+1$ its first $l$ elements 
(i.e., the sequence obtained by removing its last switching mode). 

For any node $\alpha = (j_1,\ldots,j_{l+1}) \in \mathcal S_{l+1}$, define
\begin{equation}
    P_\alpha^{l+1} \coloneqq P^l_{\pi(\alpha)}, 
    \qquad 
    K_\alpha^{l+1} \coloneqq K^l_{\pi(\alpha)}.
\end{equation}

\vspace{0.3em}
\noindent\textbf{Step 2. Feasibility of the constructed solution.}  
We now verify that $\{(P_\alpha^{l+1}, K_\alpha^{l+1})\}_{\alpha\in\mathcal{S}_{l+1}}$ constructed above satisfies the constraints of $\mathcal P_{l+1}$.  
Let $(\alpha,\beta,j)\in\mathcal E_{l+1}$ and write $\alpha=(i_1,\ldots,i_{l+1})$.  
By definition of the De~Bruijn graph (see~\cref{def:debruijn}), we have $\beta = (j, i_1, \ldots, i_l)$.  
Therefore,
\[
\pi(\alpha) = (i_1,\ldots,i_l)
\quad\text{and}\quad
\pi(\beta) = (j,i_1,\ldots,i_{l-1}),
\]
which implies $(\pi(\alpha), \pi(\beta), j)\in\mathcal E_l$.  
Since $\{(P_\alpha^l, K_\alpha^l)\}_{\alpha\in\calS_l}$ is feasible for $\mathcal P_l$, it satisfies all corresponding LMIs.  
By construction of $\{(P_\alpha^{l+1}, K_\alpha^{l+1})\}_{\alpha \in \calS_{l+1}}$, the same inequalities therefore hold for $\mathcal P_{l+1}$, proving feasibility.

\vspace{0.3em}
\noindent\textbf{Step 3. Comparison of objective values.}  
The objective function corresponding to the constructed feasible point is
\[
\min_{\alpha \in \mathcal S_{l+1}} x_0^\top P_\alpha^{l+1} x_0
= \min_{\alpha \in \mathcal S_{l+1}} x_0^\top P_{\pi(\alpha)}^{l} x_0
= \min_{\alpha \in \mathcal S_l} x_0^\top P_\alpha^{l} x_0
= g_l.
\]
Therefore, we constructed a feasible solution $\{(P_\alpha^{l+1}, K_\alpha^{l+1})\}_{\alpha\in\mathcal{S}_{l+1}}$ to $\mathcal P_{l+1}$ with the same cost value as the optimal cost of $\mathcal{P}_l$. Since $\mathcal P_{l+1}$ minimizes the same cost over $\{(P_\alpha, K_\alpha)\}_{\alpha\in\mathcal{S}_{l+1}}$, we conclude that
\[
g_{l+1} \le g_l,
\]
which establishes the desired monotonicity.

\end{document}